\numberwithin{equation}{section}
\theoremstyle{plain}
\newtheorem{thm}{\protect\theoremname}[section]
  \theoremstyle{definition}
  \newtheorem{defn}[thm]{\protect\definitionname}
  \theoremstyle{plain}
  \newtheorem{prop}[thm]{\protect\propositionname}
  \theoremstyle{plain}
  \newtheorem{lem}[thm]{\protect\lemmaname}
  \newcounter{casectr}
  \theoremstyle{remark}
  \newtheorem{rem}[thm]{\protect\remarkname}
  \theoremstyle{plain}
  \newtheorem{cor}[thm]{\protect\corollaryname}
  \theoremstyle{remark}
  \newtheorem*{rem*}{\protect\remarkname}
  \providecommand{\casename}{Case}
  \providecommand{\corollaryname}{Corollary}
  \providecommand{\definitionname}{Definition}
  \providecommand{\lemmaname}{Lemma}
  \providecommand{\propositionname}{Proposition}
  \providecommand{\remarkname}{Remark}
\providecommand{\theoremname}{Theorem}
\begin{document}

\title{Extending generalized spin representations}

\author{Robin Lautenbacher and Ralf K\"ohl}

\maketitle

\abstract

We revisit the construction of higher spin representations by Kleinschmidt and Nicolai for $E_{10}$, generalize it to arbitrary simply laced types, and provide a coordinate-free approach to the $\frac{3}{2}$-spin and $\frac{5}{2}$-spin representations. Moreover, we discuss the relationship between our findings and the representation theory of $\mathrm{Sym}_3$ pointed out to us by Levy.

\section{Introduction}

Generalized spin representations of the maximal compact subalgebra of the split real Kac--Moody algebra of type $E_{10}$ have been introduced in \cite{Damour}, \cite{Buyl} and generalized to arbitrary symmetrizable types in \cite{Hainke}.  The purpose of this note is to revisit some of the higher spin representations of type $E_{10}$ studied in \cite{AN1}, notably $\frac{3}{2}$-spin and $\frac{5}{2}$-spin, generalize these to arbitrary simply laced types, and propose a coordinate-free approach which we carry out for $\frac{3}{2}$-spin and $\frac{5}{2}$-spin.  

Our main result is the following coordinate-free extension of generalized spin representations:

\medskip
\noindent
{\bf Theorem.} {\em Let $\mathfrak{g}$ be a simply laced split real Kac--Moody algebra, let $\mathfrak{h}$ be a Cartan subalgebra of $\mathfrak{g}$, let $\lambda$ be the set consisting of the simple roots of $\mathfrak{g}$ and roots that are sums of two distinct simple roots, let $\mathfrak{k}$ be the maximal compact subalgebra of $\mathfrak{g}$, and let $\left(\cdot\vert\cdot\right)$ denote
the induced invariant bilinear form on $\mathfrak{h}^{\ast}$. A map $X : \lambda\rightarrow\mathrm{End}\left(V\right)$ satisfying the following (anti-)commutator relations for all $\alpha,\beta\in\lambda$ 
\begin{eqnarray*}
\left[X(\alpha), X(\beta)\right] & = & 0\quad\qquad\quad\quad\,\text{if }\left(\alpha\vert\beta\right)=0\\
\left\{ X(\alpha),X(\beta)\right\}  & = & X(\alpha\pm\beta)\quad\text{if $\left(\alpha\vert\beta\right)=\mp1$ and $\alpha \pm \beta \in \lambda$}
\end{eqnarray*}
provides
a finite-dimensional representation $\sigma$ of $\mathfrak{k}$
via the assignment $$\sigma\left(X_{i}\right):=X\left(\alpha_{i}\right)\otimes\Gamma\left(\alpha_{i}\right)$$ on the Berman generators $X_{1},\dots,X_{n}$ of $\mathfrak{k}$, where the $\Gamma(\alpha_i)$, $1 \leq i \leq n$ are the anti-symmetric real matrices from \eqref{remarkgamma} induced by the generalized spin representation of $\mathfrak{k}$.

Define
$X_{\frac{3}{2}} : \Delta^{\mathrm{re}}\rightarrow\mathrm{End}\left(\mathfrak{h}^{\ast}\right)$
via 
\begin{eqnarray*}
\alpha\mapsto X_{\frac{3}{2}}(\alpha) & := & -\alpha\left(\alpha\vert\cdot\right)+\frac{1}{2}\mathrm{id}_{\mathfrak{h}^{\ast}}.
\end{eqnarray*}
Moreover, for $\alpha \in \Delta^{\mathrm{re}}$ let $\pi_{\alpha}:=\alpha\left(\alpha\vert\cdot\right)\in\mathrm{End}\left(\mathfrak{h}^{\ast}\right)$ and
define
$X_{\frac{5}{2}} : \Delta^{\mathrm{re}}\rightarrow\mathrm{End}\left(\mathrm{Sym}^2(\mathfrak{h}^{\ast})\right)$
via 
\begin{eqnarray*}
\alpha\mapsto X_{\frac{5}{2}}(\alpha) & := & \pi_{\alpha}\otimes\pi_{\alpha}-\left(\pi_{\alpha}\otimes \mathrm{id}_{\mathfrak{h}^{\ast}}+\mathrm{id}_{\mathfrak{h}^{\ast}}\otimes\pi_{\alpha}\right)+\frac{1}{2}\mathrm{id}_{\mathfrak{h}^{\ast}}\otimes \mathrm{id}_{\mathfrak{h}^{\ast}}. 
\end{eqnarray*}
Then $X_{\frac{3}{2}}$ and  $X_{\frac{5}{2}}$ satisfy the above equalities for all real roots $\alpha$, $\beta$
with $\left(\alpha\vert\beta\right)\in\left\{ 0,\pm1\right\}$ and
thus each provides a representation $\sigma$ of $\mathfrak{k}$.
}

\medskip
\noindent
The results for this note have been obtained during and shortly after the first author's MSc thesis project \cite{Lautenbacher} in mathematics.
It would be interesting to understand how these representations decompose into irreducible components. We refer to \cite{AN1}, \cite{AN2} for some investigations in this direction using coordinates.

\medskip
Paul Levy pointed out to us that both assignments $X_{\frac{3}{2}}$ and $X_{\frac{5}{2}}$ are of the form $$X(\alpha) := \rho(s_\alpha) - \frac{1}{2} \mathrm{id}$$ where $\rho(s_\alpha)$ denotes the natural reflection action of the fundamental generator $s_{\alpha}$ induced on $\mathfrak{h}^*$, resp.\ $\mathrm{Sym}^2(\mathfrak{h}^{\ast})$.
For simple roots $\alpha$, $\beta$ forming a subdiagram of type $A_2$ one obtains the equivalence $$\left\{ X(\alpha),X(\beta)\right\}  =  X(\alpha\pm\beta) \quad\quad \Longleftrightarrow \quad\quad \rho(s_\alpha s_\beta s_\alpha) - \rho(s_\alpha s_\beta) - \rho(s_\beta s_\alpha) + \rho(s_\alpha) + \rho(s_\beta) - \mathrm{id} = 0.$$
Among the irreducible representations of $\mathrm{Sym}_3$, the trivial and the geometric representations satisfy the above identity, whereas the sign representation does not.
One in fact arrives at a characterization of those representations $\rho : W \to \mathrm{GL}(V)$ of the Weyl group $W$ of $\mathfrak{g}$ that can be used for extending generalized spin representations via the assignment $X(\alpha) := \rho(s_\alpha) - \frac{1}{2} \mathrm{id}$: exactly those whose restrictions to any standard subgroup $\mathrm{Sym}_3 \cong \langle s_\alpha, s_\beta \rangle \leq W$ (where $\alpha$, $\beta$ are adjacent simple roots of $\mathfrak{g}$) do not contain a sign representation as an irreducible component will do.

Since neither of the given $W$-modules $\mathfrak{h}^*$ and $\mathrm{Sym}^2(\mathfrak{h}^{\ast})$ contain a $\mathrm{Sym}_3$-sign representation, they both can be used for extending generalized spin representations. 
The module $\mathrm{Sym}^3(\mathfrak{h}^{\ast})$ on the other hand does contain a sign representation and so the $\frac{7}{2}$-spin representations discussed in \cite{AN1}, \cite{AN2} still remain elusive.

\medskip
Moreover, note that a map $X : \lambda \to \mathrm{End}(V)$ as in the statement of the Theorem naturally extends to the set of all those positive real roots that can be written as iterated sums of simple roots such that each partial sum itself is a positive real root. It is well-known that in the finite-dimensional situation this set equals the set of all positive (real) roots; in the simply-laced affine case it can be shown that this set also equals the set of all positive real roots (cf.\ \cite{Lautenbacher}). To the best of our knowledge the question what this set looks like in general is open.

\medskip
Our note contains several redundancies. First, we reproduce the method to obtain extensions of generalized spin representations of $E_{10}$ and its application to $\frac{3}{2}$ and $\frac{5}{2}$-spin representations proposed by Kleinschmidt and Nicolai in order to make their work \cite{AN1}, \cite{AN2} accessible to a wider mathematical audience and to point out that their approach actually works for any simply-laced Dynkin diagram. Second, we propose and apply our own coordinate-free method. Third, we interpret our findings in terms of $\mathrm{Sym}_3$-representation theory based on Levy's observations. This organization of our note leads to various existence proofs of $\frac{3}{2}$ and $\frac{5}{2}$-spin representations and to a wealth of starting points for further investigation.

\medskip
\noindent
    {\bf Acknowledgements.} The first author thanks the Albert Einstein Institute in Golm for the hospitality in 2017. The second author gratefully acknowledges partial support from DFG via the project KO4323/13. The second author also thanks the Albert Einstein Institute in Golm for the hospitality in 2015 and in particular Axel Kleinschmidt and Hermann Nicolai for various discussions concerning the contents of \cite{AN1}.
The authors moreover thank Paul Levy for very valuable comments on a preliminary version of this note and for pointing out the relationship of their findings to the representation theory of $\mathrm{Sym}_3$.

\section{Generalized $\frac{1}{2}$-spin representations}

Recall the notion of a Kac--Moody algebra from \cite{Kac}.
Let $A$ be a symmetrizable
generalized Cartan matrix and $\left(\mathfrak{h},\Pi,\Pi^{v}\right)$
be a realization of $A$ over $\mathbb{R}$ so that for $\mathfrak{h}_\mathbb{C}:=\mathfrak{h} \otimes_{\mathbb{R}}\mathbb{C}$
the triple $\left(\mathfrak{h}_\mathbb{C},\Pi,\Pi^{v}\right)$ is a realization
of $A$ over $\mathbb{C}$. Let $\overline{\cdot} : \mathbb{C} \to \mathbb{C}$ be complex conjugation and denote by $\omega_{0}$ the $\overline{\cdot}$-semilinear involution on the complex Kac--Moody algebra $\mathfrak{g}_{\mathbb{C}}(A)$
determined by 
\[
\omega_{0}\left(e_{i}\right)=-f_{i}\ ,\ \omega_{0}\left(f_{i}\right)=-e_{i}\ ,\ \omega_{0}\left(h\right)=-h\ \forall\, h\in\mathfrak{h}_{\mathbb{R}}.
\]
Call $\omega_{0}$ the \textit{compact involution} of $\mathfrak{g}_{\mathbb{C}}(A)$
and $\mathfrak{k}_{\mathbb{C}}(A):=\text{Fix }\omega_{0}$ the \textit{maximal compact subalgebra} of $\mathfrak{g}_{\mathbb{C}}(A)$.

Let $\mathfrak{g}(A)$ be the split real form of $\mathfrak{g}_{\mathbb{C}}(A)$, i.e., the real Kac--Moody algebra obtained as the fixed points of complex conjugation $\overline{\cdot}$ acting naturally on the complex vector space underlying $\mathfrak{g}_{\mathbb{C}}(A)$. Let $\omega_{\mathbb{C}}$ and
$\omega$ denote the Chevalley involutions on these Kac--Moody
algebras and let $\omega_{0}$ be the compact involution on $\mathfrak{g}_{\mathbb{C}}(A)$.
Then one has 
\[
\mathfrak{g}_{\mathbb{C}}(A)\supset\text{Fix }\omega_{0}\cong\text{Fix }\omega\oplus i \omega_{-1},
\]
where $\omega_{-1}$ denotes the $-1$ eigenspace of $\omega$ on $\mathfrak{g}(A)$.
%
%
The fixed point subalgebra $\mathfrak{k}(A)=\text{Fix }\omega$
is called the \textit{maximal compact subalgebra} of $\mathfrak{g}(A)$.
%

A Kac--Moody algebra $\mathfrak{g}(A)$ is called \textsl{simply laced}
if its generalized Cartan matrix contains only entries which are $0$
or $-1$ on the off-diagonal.

\begin{thm} \label{gensandrels}
\label{cor:adaption of Berman pt 1}Let $\mathfrak{g}(A)$ be a simply
laced real Kac--Moody algebra and $\mathfrak{k}$ its maximal compact subalgebra. Then $\mathfrak{k}$
is isomorphic to the free Lie algebra over $\mathbb{R}$ of generators
$X_{1},\dots,X_{n}$ modulo the ideal generated by the relations
\begin{eqnarray*}
\left[X_{i},\left[X_{i},X_{j}\right]\right] & =-X_{j} & \ \text{if }a_{ij}=-1\\
\left[X_{i},X_{j}\right] & =\,0 & \ \text{if }a_{ij}=0\ 
\end{eqnarray*}
via the isomorphism given by
\[
X_{i}\mapsto e_{i}-f_{i}.
\]
\end{thm}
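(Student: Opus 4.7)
The plan is to show that the natural map $\phi: \widetilde{\mathfrak{k}} \to \mathfrak{k}$ sending $X_i \mapsto e_i - f_i$ is an isomorphism, where $\widetilde{\mathfrak{k}}$ denotes the abstract Lie algebra defined by the stated presentation. This reduces to three tasks: verifying the relations hold in $\mathfrak{k}$, establishing surjectivity, and establishing injectivity (the main obstacle).

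The relations can be verified by direct calculation inside $\mathfrak{g}$. If $a_{ij} = 0$, then $[e_i, e_j] = [f_i, f_j] = 0$ by Serre and $[e_i, f_j] = 0$ since $i \ne j$, so $[X_i, X_j] = 0$. If $a_{ij} = -1$, the same identities give $[X_i, X_j] = [e_i, e_j] + [f_i, f_j]$. Expanding $[X_i, [X_i, X_j]]$ yields four summands, of which $[e_i, [e_i, e_j]]$ and $[f_i, [f_i, f_j]]$ vanish by the Serre relations of depth $1 - a_{ij} = 2$; applying the Jacobi identity to the other two together with $[h_i, e_j] = a_{ij} e_j = -e_j$ and $[h_i, f_j] = -a_{ij} f_j = f_j$ produces $f_j - e_j = -X_j$, as required.

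For surjectivity, $\mathfrak{k} = \mathrm{Fix}(\omega)$ is spanned by elements of the form $y + \omega(y)$ with $y$ in a positive root space of $\mathfrak{g}$. Since the positive nilpotent subalgebra is generated by $e_1, \dots, e_n$, an induction on the height of $y$, using the $\omega$-equivariance identity $\omega([e_i, y']) = -[f_i, \omega(y')]$ to rewrite brackets $[X_i, y' + \omega(y')]$ as $\omega$-symmetrizations of elements sitting in lower height strata, produces each such $y + \omega(y)$ as a Lie polynomial in the $X_i$.

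The injectivity is the main obstacle. The most direct route is to invoke Berman's general presentation theorem for maximal compact subalgebras of symmetrizable Kac--Moody algebras and observe that in the simply-laced case $a_{ij} \in \{0, -1\}$, Berman's full system of depth-$(1 - a_{ij})$ relations collapses precisely to the two stated. A more self-contained alternative is to exhibit a faithful representation of $\widetilde{\mathfrak{k}}$ factoring through $\mathfrak{k}$: the generalized spin representation of \cite{Damour}, \cite{Buyl}, \cite{Hainke}, combined with restrictions of integrable highest-weight $\mathfrak{g}$-modules, should suffice via a weight-space analysis compatible with the height grading on $\mathfrak{g}$ that detects any nonzero element of the kernel.
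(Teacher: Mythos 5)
Your main route matches the paper's: the paper simply cites \cite[Theorem~1.3]{Hainke}, which in turn rests on Berman's presentation theorem for maximal compact subalgebras, so your appeal to Berman plus the observation that for $a_{ij}\in\{0,-1\}$ the depth-$(1-a_{ij})$ relations collapse to the two stated is exactly what that citation unfolds to. Your supplementary verifications (the relations hold in $\mathfrak{k}$; surjectivity via height induction) are correct in substance, though the surjectivity induction is stated a bit lightly regarding multi-dimensional root spaces.

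One caution on the proposed ``more self-contained alternative'' for injectivity: the generalized spin representations of \cite{Hainke} are finite-dimensional while $\mathfrak{k}$ is typically infinite-dimensional (e.g.\ for $E_{10}$), so they cannot by themselves be faithful, and the suggestion to combine them with restrictions of integrable highest-weight modules and ``weight-space analysis'' is too vague to carry the weight of an injectivity proof. As written, that alternative would need substantial further work, so you are better off resting the injectivity step squarely on Berman as the paper does.
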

\begin{proof}
See \cite[Theorem~1.3]{Hainke}.
\end{proof}

\begin{defn}
\label{Def:gen spin rep for simply laced case}A representation $\rho : \mathfrak{k}\rightarrow\mathrm{End}\left(\mathbb{C}^{s}\right)$
is called a \textsl{generalized spin representation} if for the generators
$X_{1},\dots,X_{n}$ of $\mathfrak{k}$ one has
\[
\rho\left(X_{i}\right)^{2}=-\frac{1}{4}\mathrm{id}_{s}\ \forall\, i=1,\dots,n\,.
\]
\end{defn}

\begin{prop}
\label{prop:properties of gen spin reps in simply laced case}Let
$\rho : \mathfrak{k}\rightarrow\mathrm{End}\left(\mathbb{C}^{s}\right)$
be a generalized spin representation and denote by $[A,B] := AB-BA$ the commutator and by $\left\{ A,B\right\} :=AB+BA$
the anti-commutator. Then for $1\leq i\neq j\leq n$ one has
\begin{eqnarray*}
\left[\rho\left(X_{i}\right)\,,\,\rho\left(X_{j}\right)\right]=0 & \text{if }a_{ij}=0 & \Longleftrightarrow\ (i,j) \text{ do not form an edge of the Dynkin diagram}\\
\left\{ \rho\left(X_{i}\right)\,,\,\rho\left(X_{j}\right)\right\} =0 & \text{if }a_{ij}=-1 & \Longleftrightarrow\ (i,j) \text{ form an edge of the Dynkin diagram}.
\end{eqnarray*}
\end{prop}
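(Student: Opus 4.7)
The plan is to read off both statements directly from Theorem~\ref{gensandrels} combined with the defining identity $\rho(X_i)^2 = -\tfrac{1}{4}\mathrm{id}_s$ of a generalized spin representation, together with the observation that in the simply laced case the condition $a_{ij} = 0$ (resp.\ $a_{ij} = -1$) is, by definition, equivalent to $(i,j)$ being a non-edge (resp.\ edge) of the Dynkin diagram. The two biconditionals on the right are therefore tautological; what needs actual argument are the two implications concerning $[\rho(X_i),\rho(X_j)]$ and $\{\rho(X_i),\rho(X_j)\}$.

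For the commutator case the argument is immediate: by Theorem~\ref{gensandrels}, $a_{ij}=0$ forces $[X_i,X_j]=0$ in $\mathfrak{k}$, and since $\rho$ is a Lie algebra homomorphism this gives $[\rho(X_i),\rho(X_j)] = 0$ without using the spin condition at all.

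The anti-commutator case is where the spin condition enters. Write $A := \rho(X_i)$ and $B := \rho(X_j)$, so $A^2 = -\tfrac{1}{4}\mathrm{id}_s$. From Theorem~\ref{gensandrels} applied in the case $a_{ij} = -1$ we have $[X_i,[X_i,X_j]] = -X_j$, hence $[A,[A,B]] = -B$. The plan is to expand the double commutator,
\begin{equation*}
[A,[A,B]] = A^2B - 2ABA + BA^2,
\end{equation*}
substitute $A^2 = -\tfrac{1}{4}\mathrm{id}_s$ to obtain $-\tfrac{1}{2}B - 2ABA = -B$, and thereby deduce $ABA = \tfrac{1}{4}B$. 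Right-multiplying by $A$ and again using $A^2 = -\tfrac{1}{4}\mathrm{id}_s$ gives $-\tfrac{1}{4}AB = \tfrac{1}{4}BA$, i.e., $AB + BA = 0$, which is the claimed anti-commutation.

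There is essentially no serious obstacle here; the only minor point to watch is keeping the signs straight in the double commutator expansion and ensuring the deduction is genuinely two-step (one must multiply by $A$ on a specified side and then use $A^2 = -\tfrac{1}{4}\mathrm{id}_s$ once more, rather than hoping to read off $\{A,B\}=0$ directly from $ABA = \tfrac{1}{4}B$). Once this short computation is in place, both equivalences of the proposition follow.
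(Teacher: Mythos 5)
Your proposal is correct and follows essentially the same route as the paper: use Theorem~\ref{gensandrels} to reduce the commutator case to the homomorphism property, and for the anti-commutator case expand $[A,[A,B]]=-B$, substitute $A^2=-\tfrac14\mathrm{id}_s$, and right-multiply by $A$. The one small thing the paper does that you omit is to note explicitly that $A$ is invertible (indeed $A^{-1}=-4A$), so that right-multiplication by $A$ is an \emph{equivalence}, not merely an implication; this is immaterial for the Proposition itself, but the paper relies on this reversibility in the subsequent Corollary~\ref{cor:Sufficient conditions for gen spin rep}, where the converse direction $\{A,B\}=0\Rightarrow[A,[A,B]]=-B$ is what is actually used.
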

\begin{proof}
If $(i,j)$ do not form an edge of the Dynkin diagram then $a_{ij}=0$ and so $\left[X_{i},X_{j}\right]=0$
according to Theorem~\ref{cor:adaption of Berman pt 1} which is
carried over to $\text{End}\left(\mathbb{C}^{s}\right)$, since $\rho$
is a homomorphism. If $(i,j)$ form an edge, which is to say $a_{ij}=-1$,
then by Theorem~\ref{cor:adaption of Berman pt 1} one has
\[
\left[X_{i},\left[X_{i},X_{j}\right]\right]=-X_{j}
\]
and setting $A=\rho\left(X_{i}\right)$, $B=\rho\left(X_{j}\right)$
one computes in $\text{End}\left(\mathbb{C}^{s}\right)$
\begin{eqnarray*}
\left[A,\left[A,B\right]\right] & = & -B\\
\Leftrightarrow\ A^{2}B-ABA-ABA+BA^{2} & = & -B\\
\Leftrightarrow\ -\frac{1}{4}B-2ABA-\frac{1}{4}B & = & -B\\
\Leftrightarrow\ -2ABA & = & -\frac{1}{2}B\ \quad\quad\quad \vert\ \cdot A\ \text{from the right}\\
\Leftrightarrow\ \frac{1}{2}AB & = & -\frac{1}{2}BA\\
\Leftrightarrow\ AB+BA & = & 0.
\end{eqnarray*}
Note that multiplication with $A$ preserves equivalence
because $A$ is invertible, since $A^{-1}=-4A$.
\end{proof}

\begin{cor}
\label{cor:Sufficient conditions for gen spin rep}Given matrices
$A_{1},\dots,A_{n}\in\mathbb{C}^{s\times s}$with 
\begin{eqnarray*}
(i)\ A_{i}^{2} & = & -\frac{1}{4}\mathrm{id}_{s},\\
(ii)\ \left[A_{i},A_{j}\right] & = & 0, \text{ if $(i,j)$ do not form an edge of the Dynkin diagram},\\
(iii)\ \left\{ A_{i},A_{j}\right\}  & = & 0, \text{ if $(i,j)$ form an edge of the Dynkin diagram},
\end{eqnarray*}
 the extension of the map $X_{i}\mapsto A_{i}$ defines a generalized
 spin representation $\rho$ from $\mathfrak{k}$ on $\mathbb{C}^{s}$.
\end{cor}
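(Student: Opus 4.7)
The strategy is to use the presentation of $\mathfrak{k}$ given by Theorem~\ref{gensandrels} (Berman generators and relations) and appeal to the universal property of free Lie algebras. Since $\mathfrak{k}$ is presented as a quotient of the free Lie algebra on $X_1, \dots, X_n$ by the ideal generated by the two families of relations, it suffices to (a) extend the assignment $X_i \mapsto A_i$ to a Lie algebra homomorphism $\tilde\rho$ from the free Lie algebra on $X_1, \dots, X_n$ into $\mathrm{End}(\mathbb{C}^{s})$ (viewed as a Lie algebra under the commutator), which is automatic, and (b) verify that $\tilde\rho$ kills both types of defining relations, so that it descends to a representation $\rho : \mathfrak{k} \to \mathrm{End}(\mathbb{C}^s)$.

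Checking the Serre-type relations is a direct computation. For pairs $(i,j)$ with $a_{ij}=0$, the relation $[X_i, X_j] = 0$ is mapped to $[A_i, A_j] = 0$, which is precisely assumption $(ii)$. For pairs with $a_{ij}=-1$, one must show $[A_i, [A_i, A_j]] = -A_j$. Expanding,
\begin{equation*}
[A_i, [A_i, A_j]] = A_i^2 A_j - 2 A_i A_j A_i + A_j A_i^2,
\end{equation*}
and substituting $A_i^2 = -\tfrac{1}{4}\mathrm{id}_s$ from $(i)$ and $A_iA_j = -A_j A_i$ from $(iii)$ yields $-\tfrac{1}{2}A_j - 2A_iA_jA_i = -\tfrac{1}{2}A_j + 2A_jA_i^2 = -\tfrac{1}{2}A_j - \tfrac{1}{2}A_j = -A_j$, as required. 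This is simply the calculation in the proof of Proposition~\ref{prop:properties of gen spin reps in simply laced case} run backwards, with the hypothesis $A_i^{-1} = -4A_i$ available from $(i)$ so that no division issues arise.

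Hence $\tilde\rho$ descends to a Lie algebra homomorphism $\rho : \mathfrak{k} \to \mathrm{End}(\mathbb{C}^s)$, and assumption $(i)$ precisely expresses that $\rho$ is a generalized spin representation in the sense of Definition~\ref{Def:gen spin rep for simply laced case}. No step is a genuine obstacle here: once Theorem~\ref{gensandrels} is in hand, the only content is the short anti-commutator manipulation above, and this corollary should be stated and proved essentially as an immediate consequence of the presentation.
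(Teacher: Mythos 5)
Your proof is correct and takes essentially the same approach as the paper: reduce to verifying the Berman presentation relations from Theorem~\ref{gensandrels}, handle the commuting case via hypothesis $(ii)$, and establish $[A_i,[A_i,A_j]]=-A_j$ from $(i)$ and $(iii)$ by the same anticommutator manipulation. The paper merely cites the equivalence $\{A,B\}=0 \Leftrightarrow [A,[A,B]]=-B$ (given $A^2=-\tfrac14\mathrm{id}_s$) extracted from the proof of Proposition~\ref{prop:properties of gen spin reps in simply laced case} instead of redoing the computation, as you acknowledge.
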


\begin{proof}
$(i)$ is a necessary condition by the definition of spin representations.
Assertion $(ii)$ ensures that the commutation relations between $X_{i},X_{j}$
are respected by $\rho$ if $(i,j)$ do not form an edge, because in this case
$\left[X_{i},X_{j}\right]=0$. Finally, $(iii)$ ensures that for
$a_{ij}\neq0$ the relation 
\[
\left[X_{i},\left[X_{i},X_{j}\right]\right]=-X_{j}
\]
for $i\neq j$ is respected by $\rho$ since according to the proof
of Proposition~\ref{prop:properties of gen spin reps in simply laced case}
the condition $\left\{ A,B\right\} =0$ is equivalent to $\left[A,\left[A,B\right]\right]=-B$
as long as $A^{2}=B^{2}=-\frac{1}{4}\mathrm{id}_{s}$. 
\end{proof}

The existence of generalized spin representations has been established in \cite{Hainke}.

\begin{thm}
\label{thm:extensions of simply laced spin reps}For $1\leq r<n$
let $\mathfrak{k}_{\leq r}:=\langle X_{1},\dots,X_{r}\rangle$ denote
the subalgebra of $\mathfrak{k}$ that is generated by the first $r$
generators. Furthermore, let $\rho : \mathfrak{k}_{\leq r}\rightarrow\mathrm{End}\left(\mathbb{C}^{s}\right)$
be a generalized spin representation as in Definition~\ref{Def:gen spin rep for simply laced case}. 

If $X_{r+1}$ centralizes $\mathfrak{k}_{\leq r}$, that is to say
$X_{r+1}$ commutes with all generators $X_{1},\dots,X_{r}$, then
there exists a generalized spin representation $\rho' : \mathfrak{k}_{\leq r+1}\rightarrow\mathrm{End}\left(\mathbb{C}^{s}\right)$
with $\rho'_{\vert\mathfrak{k}_{\leq r}}=\rho$ given by sending $X_{r+1}$
to $\frac{1}{2}i\cdot \mathrm{id}_{s}$.

If $X_{r+1}$ does not centralize $\mathfrak{k}_{\leq r}$, then
$\rho$ can be extended to a generalized spin representation $\rho' : \mathfrak{k}_{\leq r+1}\rightarrow\mathrm{End}\left(\mathbb{C}^{s}\oplus\mathbb{C}^{s}\right)$.
For this define a sign automorphism $s_{0} : \mathfrak{k}_{\leq r}\rightarrow\mathfrak{k}_{\leq r}$
by
\[
s_{0}\left(X_{i}\right)=\begin{cases}
X_{i}, & \text{ if $(i,r+1)$ do not form an edge of the Dynkin diagram},\\
-X_{i}, & \text{ if $(i,r+1)$ form an edge of the Dynkin diagram},
\end{cases}
\]
and define the extension via
\[
\rho'_{\vert\mathfrak{k}_{\leq r}}=\rho\oplus\rho\circ s_{0}
\]
and 
\[
\rho'\left(X_{r+1}\right)=\frac{1}{2}\, \mathrm{id}_{s}\otimes\begin{pmatrix}0 & i\\
i & 0
\end{pmatrix}.
\]
\end{thm}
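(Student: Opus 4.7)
The plan is to apply Corollary~\ref{cor:Sufficient conditions for gen spin rep}: in each case I will exhibit matrices $A_{1},\ldots,A_{r+1}$ and verify that $(i)$ $A_{i}^{2}=-\tfrac{1}{4}\mathrm{id}$, $(ii)$ $[A_{i},A_{j}]=0$ when $(i,j)$ is not an edge, and $(iii)$ $\{A_{i},A_{j}\}=0$ when it is. Since $\rho$ is already a generalized spin representation of $\mathfrak{k}_{\leq r}$, conditions $(i)$--$(iii)$ hold for all pairs with $i,j\le r$, so only interactions with the new generator $X_{r+1}$ require attention.

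\textbf{Centralizing case.} By Theorem~\ref{gensandrels}, the hypothesis that $X_{r+1}$ commutes with each $X_1,\ldots,X_r$ is equivalent to the vertex $r+1$ being non-adjacent to every $i\le r$ in the Dynkin diagram. The scalar assignment $\rho'(X_{r+1})=\tfrac{i}{2}\mathrm{id}_{s}$ then satisfies $(i)$ because $(i/2)^{2}=-1/4$, and satisfies $(ii)$ for every pair $(i,r+1)$ because a scalar matrix commutes with everything.

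\textbf{Non-centralizing case.} I would first verify that $s_{0}$ is a well-defined Lie algebra automorphism of $\mathfrak{k}_{\leq r}$: by Theorem~\ref{gensandrels} it suffices to check that the prescription $X_{i}\mapsto\epsilon_{i}X_{i}$ with $\epsilon_{i}\in\{\pm1\}$ preserves the Berman relations. Indeed both sides of $[X_{i},[X_{i},X_{j}]]=-X_{j}$ pick up the same factor $\epsilon_{j}$ (the two factors of $\epsilon_{i}$ cancel), and $[X_{i},X_{j}]=0$ is manifestly invariant. Consequently $\rho\circ s_{0}$ is again a generalized spin representation of $\mathfrak{k}_{\leq r}$, and the block-diagonal direct sum $\rho\oplus(\rho\circ s_{0})$ automatically satisfies $(i)$--$(iii)$ for all pairs with indices $\le r$.

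It remains to check the relations involving $A_{r+1}=\tfrac{1}{2}\mathrm{id}_{s}\otimes\begin{pmatrix}0 & i\\ i & 0\end{pmatrix}$. From $\begin{pmatrix}0 & i\\ i & 0\end{pmatrix}^{2}=-\mathrm{id}_{2}$ one reads off $A_{r+1}^{2}=-\tfrac{1}{4}\mathrm{id}_{2s}$, giving $(i)$. Writing $A_{i}=\mathrm{diag}\bigl(\rho(X_{i}),\epsilon_{i}\rho(X_{i})\bigr)$ for $i\le r$, a direct $2\times2$ block computation yields
\begin{equation*}
A_{i}A_{r+1}\pm A_{r+1}A_{i}=\tfrac{i}{2}\begin{pmatrix}0 & (1\pm\epsilon_{i})\rho(X_{i})\\ \pm(1\pm\epsilon_{i})\rho(X_{i}) & 0\end{pmatrix},
\end{equation*}
so the commutator vanishes iff $\epsilon_{i}=+1$ (non-edge) and the anti-commutator vanishes iff $\epsilon_{i}=-1$ (edge), matching the Dynkin adjacency encoded by $s_{0}$. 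The whole argument is routine matrix algebra; the only conceptual point—and the mildly delicate step worth highlighting—is recognizing that the sign twist $s_{0}$ is chosen precisely so that the off-diagonal block of $A_{r+1}$ anti-commutes (resp.\ commutes) with each $A_{i}$ for $i$ adjacent (resp.\ non-adjacent) to $r+1$.
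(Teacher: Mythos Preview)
Your argument is correct. The paper itself does not give a proof here but simply refers to \cite[Theorem~3.9]{Hainke}; what you have written is essentially the verification carried out in that reference, reduced to checking the conditions of Corollary~\ref{cor:Sufficient conditions for gen spin rep} for the new generator against the old ones.
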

\begin{proof}
See \cite[Theorem~3.9]{Hainke}.
\end{proof}

\begin{cor}
  \label{cor:Construction of a gen spin rep. for simply laced case}
  Given
a simply laced Kac--Moody algebra $\mathfrak{g}(A)$ and a maximal coclique of size $r$, then there exists a generalized
spin representation $\rho\,:\ \mathfrak{k}\rightarrow\mathrm{End}\left(\mathbb{C}^{s}\right)$,
where $s=2^{n-r}$, with compact image.
\end{cor}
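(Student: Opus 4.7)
The plan is to apply Theorem~\ref{thm:extensions of simply laced spin reps} iteratively, starting from a one-dimensional representation supported on the coclique. After relabeling the simple roots so that the given maximal coclique corresponds to the indices $\{1, \ldots, r\}$, I define an initial representation $\rho_0 : \mathfrak{k}_{\leq r} \to \mathrm{End}(\mathbb{C}^1)$ by sending each generator $X_j$, $1 \leq j \leq r$, to $\tfrac{1}{2} i \in \mathbb{C}$ (with $i$ denoting the imaginary unit). Since the coclique contains no edges of the Dynkin diagram, Theorem~\ref{gensandrels} gives $[X_j, X_k] = 0$ in $\mathfrak{k}_{\leq r}$ for all $j, k \leq r$, and this relation is trivially preserved in $\mathrm{End}(\mathbb{C}^1)$; combined with $(\tfrac{1}{2} i)^2 = -\tfrac{1}{4}$, Corollary~\ref{cor:Sufficient conditions for gen spin rep} confirms that $\rho_0$ is a generalized spin representation.

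For each $k = 1, \ldots, n-r$ I would then extend $\rho_{k-1}$ on $\mathfrak{k}_{\leq r+k-1}$ to $\rho_k$ on $\mathfrak{k}_{\leq r+k}$ using the doubling case of Theorem~\ref{thm:extensions of simply laced spin reps}, which multiplies the representation dimension by two. Invoking the doubling case rather than the centralizing one requires that $X_{r+k}$ does not centralize $\mathfrak{k}_{\leq r+k-1}$, and this is precisely where maximality of the coclique enters (and is the only genuine subtlety in the whole argument): since the vertex $r+k$ lies outside the maximal coclique, there must exist some $j \in \{1, \ldots, r\}$ with $a_{j, r+k} = -1$, and by Theorem~\ref{gensandrels} this forces $[X_j, X_{r+k}] \neq 0$ inside $\mathfrak{k}_{\leq r+k-1}$. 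After $n - r$ doublings the resulting representation acts on $\mathbb{C}^{2^{n-r}}$, as required.

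For the compact image assertion I would verify that every Berman generator is mapped to a skew-Hermitian matrix. The initial images $\tfrac{1}{2} i$ are trivially skew-Hermitian; in each doubling step $X_{r+k}$ is sent to $\tfrac{1}{2} \mathrm{id}_s \otimes \begin{pmatrix} 0 & i \\ i & 0 \end{pmatrix}$, which is also skew-Hermitian; and previously defined generators are extended via $\rho \oplus \rho \circ s_0$, which preserves skew-Hermiticity because the sign automorphism $s_0$ only negates certain generators and skew-Hermitian matrices are closed under negation. Since the commutator of skew-Hermitian matrices is skew-Hermitian, the image of $\mathfrak{k}$ is contained in $\mathfrak{u}(2^{n-r})$, the Lie algebra of a compact unitary group, yielding the claimed compactness.
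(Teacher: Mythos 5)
Your proof is correct and reconstructs the argument the paper cites from Hainke: start with a one-dimensional representation on the maximal coclique and iterate the doubling case of Theorem~\ref{thm:extensions of simply laced spin reps} for the remaining $n-r$ vertices, with maximality of the coclique guaranteeing the non-centralizing case at each step, and compactness following from closure of skew-Hermitian matrices under direct sums, sign twists, and commutators. This is essentially the intended approach.
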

\begin{proof}
See \cite[Corollary~3.10 and Theorem~3.14]{Hainke}.
\end{proof}

\section{Extending a generalized
$\frac{1}{2}$-spin representation \\ --- following Kleinschmidt and Nicolai}    \label{sub:Extending-a-generalized}

Throughout this section let $\mathfrak{g}$ be a simply
laced split real Kac--Moody algebra with maximal compact subalgebra $\mathfrak{k}$.
By Corollary~\ref{cor:Construction of a gen spin rep. for simply laced case}
there exists a generalized $\frac{1}{2}$-spin representation $\rho : \mathfrak{k}\rightarrow\mathrm{End}(\mathbb{C}^{l})$.
In this section we make use of Clifford algebras in order to define higher generalized spin representations as carried out by Kleinschmidt and Nicolai \cite{AN1} for $E_{10}$.

Let $V\otimes S$ be the tensor product of two $\mathbb{R}$-vector spaces
$V$ with basis $\left\{ e^{1},\dots,e^{k}\right\}$ and $S$ with
basis $\left\{ f_{1},\dots,f_{l}\right\} $. Then $\left\{ e^{i}\otimes f_{j}\ \vert\ 1\leq i\leq k,\ 1\leq j\leq l\right\}$ is a natural $\mathbb{R}$-basis of $V \otimes S$.
Endow $V$ with a nondegenerate bilinear form
$q_{1}$ and $S$ with a positive definite bilinear form $q_{2}$ such
that the basis $\left\{ f_{1},\dots,f_{l}\right\}$ is orthonormal, i.e., 
\[
q_{2}\left(f_{\alpha},f_{\beta}\right)=\delta_{\alpha\beta}\ \quad \text{for $\alpha,\beta\in\{1,\dots,l\}$}.
\]
Let $\left(G^{ab}\right)_{1\leq a,b\leq k}$ denote the Gram matrix of $q_1$ with respect to the basis $\left\{ e^{1},\dots,e^{k}\right\}$, i.e., the matrix whose
components are given by 
\[
G^{ab}=q_{1}\left(e^{a},e^{b}\right)\ \quad \text{for $a,b\in\{1,\dots,k\}$}.
\]
Define $q:=q_1 \otimes q_2$ as the bilinear extension of $q_1$ and $q_2$ to $V\otimes S$ so that on the chosen basis
$\left\{ e^{i}\otimes f_{j}\ \vert\ 1\leq i\leq k,\ 1\leq j\leq l\right\}$ one has for $\alpha,\beta\in\{1,\dots,l\},\, a,b\in\{1,\dots,k\}$
\[
q\left(e^{a}\otimes f_{\alpha}, e^{b}\otimes f_{\beta}\right)=q_{1}\left(e^{a},e^{b}\right)\cdot q_{2}\left(f_{\alpha},f_{\beta}\right)=G^{ab}\delta_{\alpha\beta}.
\]
The bilinear form $q$ induces a quadratic form $$Q : V\otimes S \to \mathbb{R} : w \mapsto q(w,w).$$

One defines the Clifford algebra $\mathcal{S}=Cl(V\otimes S, Q)$ as the quotient of $\mathcal{T}\left(V\otimes S\right)$ modulo the ideal $I_{Q}$ generated by elements of the form
\[
w\otimes w-\frac{1}{2}Q(w) \cdot 1\ ,\ w\in V\otimes S.
\]
In $\mathcal{S}$ one therefore has $w^{2}=\frac{1}{2}Q(w)$, which via polarization
one can restate this as 
\begin{eqnarray}
wv+vw & = & q(v,w).    \label{differenceinnormalization}
\end{eqnarray}
On the level of the basis for $\alpha,\beta\in\{1,\dots,l\},\, a,b\in\{1,\dots,k\}$ this reads as
\[
\left(e^{a}\otimes f_{\alpha}\right)\left(e^{b}\otimes f_{\beta}\right)+\left(e^{b}\otimes f_{\beta}\right)\left(e^{a}\otimes f_{\alpha}\right)=G^{ab}\delta_{\alpha\beta},
\]
which one may repackage in a compact notation by defining for $\alpha\in\{1,\dots,l\}, \mathcal{A} \in\{1,\dots,k\}$  
\begin{eqnarray}
\phi_{\alpha}^{\mathcal{A}} & := & e^{\mathcal{A}}\otimes f_{\alpha}, \label{phidef}
\end{eqnarray}
thus yielding the identity for $\alpha,\beta\in\{1,\dots,l\},\,\mathcal{A},\mathcal{B}\in\{1,\dots,k\}$
\begin{equation}
\left\{ \phi_{\alpha}^{\mathcal{A}}\,,\,\phi_{\beta}^{\mathcal{B}}\right\} :=\phi_{\alpha}^{\mathcal{A}}\phi_{\beta}^{\mathcal{B}}+\phi_{\beta}^{\mathcal{B}}\phi_{\alpha}^{\mathcal{A}}=G^{\mathcal{AB}}\delta_{\alpha\beta}.\label{eq:anti-commutator in second quantization of root operators-1}
\end{equation}
Since $\left\{ e^{i}\otimes f_{j}\ \vert\ 1\leq i\leq k,\ 1\leq j\leq l\right\} $
is a basis of $V\otimes S$, the set $\left\{ \phi_{\alpha}^{\mathcal{A}}\ \vert\ 1\leq\mathcal{A}\leq k,\ 1\leq\alpha\leq l\right\} $
is a generating set of the $\mathbb{R}$-algebra $\mathcal{S}$. 

%
%

\begin{lem}
  \label{lem:commutator of bilinear ansatz}
For $X,Y \in \mathbb{R}^{k\times k}$ and $S,T \in \mathbb{R}^{l \times l}$ consider the following
elements of $\mathcal{S}$:
\begin{eqnarray*}
\hat{A} & :=& \sum_{\mathcal{A},\mathcal{B}=1}^k\sum_{\alpha,\beta=1}^lX_{\mathcal{AB}}S^{\alpha\beta}\phi_{\alpha}^{\mathcal{A}}\phi_{\beta}^{\mathcal{B}}, \\  \hat{B} & := & \sum_{\mathcal{C},\mathcal{D}=1}^k\sum_{\gamma,\delta=1}^lY_{\mathcal{CD}}T^{\gamma\delta}\phi_{\gamma}^{\mathcal{C}}\phi_{\delta}^{\mathcal{D}}.
\end{eqnarray*}
Under the hypothesis that for all $\alpha,\beta\in\{1,\dots,l\}$ and for all $\mathcal{A},\mathcal{B}\in\{1,\dots,k\}$
\begin{eqnarray}
  X_{\mathcal{AB}}S^{\alpha\beta}& = & -X_{\mathcal{BA}}S^{\beta\alpha} \notag \\
  Y_{\mathcal{AB}}T^{\alpha\beta}& = & -Y_{\mathcal{BA}}T^{\beta\alpha}\label{eq:antisymmetry-property of ansatz matrices-1} 
\end{eqnarray}
the commutator of $\hat{A}$
and $\hat{B}$ is equal to
\begin{equation}
\left[\hat{A},\hat{B}\right]=\sum_{\mathcal{A},\mathcal{B}=1}^k\sum_{\alpha,\beta=1}^l \phi_{\alpha}^{\mathcal{A}}\left(\left[X,Y\right]_{\mathcal{AB}}\left\{ S,T\right\} ^{\alpha\beta}+\left\{ X,Y\right\} _{\mathcal{AB}}\left[S,T\right]^{\alpha\beta}\right)\phi_{\beta}^{\mathcal{B}}\ ,\label{eq:commutator of bilinear ansatz-1}
\end{equation}
where
(anti-)commutators of $X$ and $Y$, resp.\ $S$ and $T$ are taken with respect
to the bilinear forms as follows:
\begin{eqnarray}
\left[X,Y\right]_{\mathcal{AB}} & = & \sum_{\mathcal{C},\mathcal{D}=1}^k\left(X_{\mathcal{AC}}G^{\mathcal{CD}}Y_{\mathcal{DB}}-Y_{\mathcal{AC}}G^{\mathcal{CD}}X_{\mathcal{DB}}\right), \label{so91commutator}\\
\left\{ X,Y\right\} _{\mathcal{AB}} & = & \sum_{\mathcal{C},\mathcal{D}=1}^k\left(X_{\mathcal{AC}}G^{\mathcal{CD}}Y_{\mathcal{DB}}+Y_{\mathcal{AC}}G^{\mathcal{CD}}X_{\mathcal{DB}}\right), \label{so91anticommutator} \\
\left[S,T\right]^{\alpha\beta} & = & \sum_{\gamma,\delta=1}^l\left(S^{\alpha\gamma}\delta_{\gamma\delta}T^{\delta\beta}-T^{\alpha\gamma}\delta_{\gamma\delta}S^{\delta\beta}\right), \label{genspincommutator} \\
\left\{ S,T\right\} ^{\alpha\beta} & = & \sum_{\gamma,\delta=1}^l\left(S^{\alpha\gamma}\delta_{\gamma\delta}T^{\delta\beta}+T^{\alpha\gamma}\delta_{\gamma\delta}S^{\delta\beta}\right). \label{genspinanticommutator}
\end{eqnarray}
\end{lem}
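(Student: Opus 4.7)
The plan is to compute $[\hat A,\hat B]=\hat A\hat B-\hat B\hat A$ directly from the Clifford relation \eqref{eq:anti-commutator in second quantization of root operators-1}, reducing first to a universal identity about quadratic monomials and then grinding through the four resulting sums with the antisymmetry hypothesis.

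The first step is to establish the associative-algebra identity
\[
[ab,cd]=\{b,c\}\,ad-\{b,d\}\,ac+\{a,c\}\,db-\{a,d\}\,cb,
\]
valid whenever the four anti-commutators on the right are central. This follows from the Leibniz rule $[ab,cd]=a[b,cd]+[a,cd]b$ together with the one-line computation $[x,cd]=\{x,c\}d-\{x,d\}c$, which uses only $cx=-xc+\{x,c\}$ and centrality of $\{x,c\}$. Applied to $a=\phi_\alpha^{\mathcal A}$, $b=\phi_\beta^{\mathcal B}$, $c=\phi_\gamma^{\mathcal C}$, $d=\phi_\delta^{\mathcal D}$, relation \eqref{eq:anti-commutator in second quantization of root operators-1} turns each anti-commutator into a scalar of the form $G^{\bullet\bullet}\delta_{\bullet\bullet}$, producing four explicit terms. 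Note that the Clifford relation itself forces $G$ to be symmetric (swap $\alpha=\beta$ and the two factors in \eqref{eq:anti-commutator in second quantization of root operators-1}), which I will use freely below.

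Next, I would multiply the resulting four-term expression by $X_{\mathcal{AB}}S^{\alpha\beta}Y_{\mathcal{CD}}T^{\gamma\delta}$ and sum over all eight indices, yielding four sums $T_1,T_2,T_3,T_4$. For each sum the Kronecker delta contracts one pair of indices, after which the antisymmetry hypothesis \eqref{eq:antisymmetry-property of ansatz matrices-1} is invoked to relabel; for instance, $Y_{\mathcal{CD}}T^{\gamma\beta}=-Y_{\mathcal{DC}}T^{\beta\gamma}$ converts $T_2$ into the same shape as $T_1$, giving $T_1+T_2=2\sum(XGY)_{\mathcal{AB}}(ST)^{\alpha\beta}\,\phi_\alpha^{\mathcal A}\phi_\beta^{\mathcal B}$. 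An analogous pair of antisymmetry applications on $T_3+T_4$, together with the symmetry of $G$ and a final swap of summation indices, yields $T_3+T_4=-2\sum(YGX)_{\mathcal{AB}}(TS)^{\alpha\beta}\,\phi_\alpha^{\mathcal A}\phi_\beta^{\mathcal B}$.

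To finish, I would verify the purely algebraic identity
\[
[X,Y]_{\mathcal{AB}}\{S,T\}^{\alpha\beta}+\{X,Y\}_{\mathcal{AB}}[S,T]^{\alpha\beta}=2(XGY)_{\mathcal{AB}}(ST)^{\alpha\beta}-2(YGX)_{\mathcal{AB}}(TS)^{\alpha\beta},
\]
obtained by expanding the four $(\cdot)(\cdot)$ products on the left according to the definitions \eqref{so91commutator}--\eqref{genspinanticommutator}: the cross terms cancel in pairs and one is left with precisely the right-hand side. Substituting this into the sum from the previous paragraph gives \eqref{eq:commutator of bilinear ansatz-1}. The main obstacle is purely the bookkeeping in the third step: the antisymmetry hypothesis is indispensable because it is the only mechanism that packages the four individually asymmetric contractions into the symmetric shape $[X,Y]\{S,T\}+\{X,Y\}[S,T]$; without it, two additional terms of the form $\{X,Y\}\{S,T\}$ and $[X,Y][S,T]$ would survive.
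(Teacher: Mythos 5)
Your proof is correct and follows essentially the same route as the paper: both reduce $[\hat A,\hat B]$ by repeated application of the Clifford relation to the same four-term sum over $G^{\bullet\bullet}\delta_{\bullet\bullet}\,\phi\phi$, then contract against $X\otimes S$ and $Y\otimes T$, invoke the antisymmetry hypothesis and the symmetry of $G$ and $\delta$ to recombine, and finish with the elementary identity $[X,Y]\{S,T\}+\{X,Y\}[S,T]=2(XY)(ST)-2(YX)(TS)$ (the paper's preliminary computation). Your only real departure is a presentational one: you package the index-shuffling into the universal identity $[ab,cd]=\{b,c\}ad-\{b,d\}ac+\{a,c\}db-\{a,d\}cb$ (valid when the anti-commutators are central), which the paper instead unfolds line by line; this is a cleaner way of organizing the same computation and would shorten the argument.
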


\begin{rem}
  \begin{enumerate}
    \item On level of the tensor product matrices, hypothesis \eqref{eq:antisymmetry-property of ansatz matrices-1} simply requires anti-symmetry: 
\[
X^{T}\otimes S^{T}=-X\otimes S.
\]
\item The statement of the lemma can be found as \cite[(4.17), p.~13 and footnote~10, p.~14]{AN1}. 
  \end{enumerate}
\end{rem}

\begin{proof}[Proof of Lemma~\ref{lem:commutator of bilinear ansatz}.]
One computes
\begin{eqnarray}
&& \left[X,Y\right]_{\mathcal{AB}}\left\{ S,T\right\}^{\alpha\beta} +\left\{ X,Y\right\}_{\mathcal{AB}} \left[S,T\right]^{\alpha\beta} \notag\\ & = & \left((XY)_{\mathcal{AB}}-(YX)_{\mathcal{AB}}\right)\left((ST)^{\alpha\beta}+(TS)^{\alpha\beta}\right)+\left((XY)_{\mathcal{AB}}+(YX)_{\mathcal{AB}}\right)\left((ST)^{\alpha\beta}-(TS)^{\alpha\beta}\right)\notag\\
 & = & \left(XY\right)_{\mathcal{AB}}\left(ST\right)^{\alpha\beta}+\left(XY\right)_{\mathcal{AB}}\left(TS\right)^{\alpha\beta}-\left(YX\right)_{\mathcal{AB}}\left(ST\right)^{\alpha\beta}-\left(YX\right)_{\mathcal{AB}}\left(TS\right)^{\alpha\beta}\notag\\
 &  & +\left(XY\right)_{\mathcal{AB}}\left(ST\right)^{\alpha\beta}-\left(XY\right)_{\mathcal{AB}}\left(TS\right)^{\alpha\beta}+\left(YX\right)_{\mathcal{AB}}\left(ST\right)^{\alpha\beta}-\left(YX\right)_{\mathcal{AB}}\left(TS\right)^{\alpha\beta}\notag\\
 & = & 2\left(XY\right)_{\mathcal{AB}}\left(ST\right)^{\alpha\beta}-2\left(YX\right)_{\mathcal{AB}}\left(TS\right)^{\alpha\beta}, \label{firstpreliminarycomputation}
\end{eqnarray}
where in analogy to the (anti-)commutators one abbreviates
\begin{eqnarray*}
  \left(XY\right)_{\mathcal{AB}}=\sum_{\mathcal{C},\mathcal{D}=1}^kX_{\mathcal{AC}}G^{\mathcal{CD}}Y_{\mathcal{DB}}, & &  (YX)_{\mathcal{AB}} =  \sum_{\mathcal{C},\mathcal{D}=1}^kY_{\mathcal{AC}}G^{\mathcal{CD}}X_{\mathcal{DB}}, \\
  (ST)^{\alpha\beta} = \sum_{\gamma,\delta=1}^lS^{\alpha\gamma}\delta_{\gamma\delta}T^{\delta\beta}, & & (TS)^{\alpha\beta} = \sum_{\gamma,\delta=1}^lT^{\alpha\gamma}\delta_{\gamma\delta}S^{\delta\beta}.
  \end{eqnarray*}
Several applications of equality~\eqref{eq:anti-commutator in second quantization of root operators-1}
yield
\begin{eqnarray*}
\left[\hat{A},\hat{B}\right] & = & \sum_{\mathcal{A},\mathcal{B},\mathcal{C},\mathcal{D}=1}^k \sum_{\alpha,\beta,\gamma,\delta=1}^lX_{\mathcal{AB}}S^{\alpha\beta}Y_{\mathcal{CD}}T^{\gamma\delta}\left(\phi_{\alpha}^{\mathcal{A}}\phi_{\beta}^{\mathcal{B}}\phi_{\gamma}^{\mathcal{C}}\phi_{\delta}^{\mathcal{D}}-\phi_{\gamma}^{\mathcal{C}}\phi_{\delta}^{\mathcal{D}}\phi_{\alpha}^{\mathcal{A}}\phi_{\beta}^{\mathcal{B}}\right)\\
 & = & \sum_{\mathcal{A},\mathcal{B},\mathcal{C},\mathcal{D}=1}^k \sum_{\alpha,\beta,\gamma,\delta=1}^lX_{\mathcal{AB}}S^{\alpha\beta}Y_{\mathcal{CD}}T^{\gamma\delta}\left(\phi_{\alpha}^{\mathcal{A}}\phi_{\beta}^{\mathcal{B}}\phi_{\gamma}^{\mathcal{C}}\phi_{\delta}^{\mathcal{D}}+\phi_{\gamma}^{\mathcal{C}}\phi_{\alpha}^{\mathcal{A}}\phi_{\delta}^{\mathcal{D}}\phi_{\beta}^{\mathcal{B}}-G^{\mathcal{DA}}\delta_{\delta\alpha}\phi_{\gamma}^{\mathcal{C}}\phi_{\beta}^{\mathcal{B}}\right)\\
 & = & \sum_{\mathcal{A},\mathcal{B},\mathcal{C},\mathcal{D}=1}^k \sum_{\alpha,\beta,\gamma,\delta=1}^lX_{\mathcal{AB}}S^{\alpha\beta}Y_{\mathcal{CD}}T^{\gamma\delta}\left(\phi_{\alpha}^{\mathcal{A}}\phi_{\beta}^{\mathcal{B}}\phi_{\gamma}^{\mathcal{C}}\phi_{\delta}^{\mathcal{D}}-\phi_{\alpha}^{\mathcal{A}}\phi_{\gamma}^{\mathcal{C}}\phi_{\delta}^{\mathcal{D}}\phi_{\beta}^{\mathcal{B}}+G^{\mathcal{CA}}\delta_{\gamma\alpha}\phi_{\delta}^{\mathcal{D}}\phi_{\beta}^{\mathcal{B}}-G^{\mathcal{DA}}\delta_{\delta\alpha}\phi_{\gamma}^{\mathcal{C}}\phi_{\beta}^{\mathcal{B}}\right)\\
 & = & \sum_{\mathcal{A},\mathcal{B},\mathcal{C},\mathcal{D}=1}^k \sum_{\alpha,\beta,\gamma,\delta=1}^lX_{\mathcal{AB}}S^{\alpha\beta}Y_{\mathcal{CD}}T^{\gamma\delta}\left(\phi_{\alpha}^{\mathcal{A}}\phi_{\beta}^{\mathcal{B}}\phi_{\gamma}^{\mathcal{C}}\phi_{\delta}^{\mathcal{D}}+\phi_{\alpha}^{\mathcal{A}}\phi_{\gamma}^{\mathcal{C}}\phi_{\beta}^{\mathcal{B}}\phi_{\delta}^{\mathcal{D}}-G^{\mathcal{DB}}\delta_{\delta\beta}\phi_{\alpha}^{\mathcal{A}}\phi_{\gamma}^{\mathcal{C}}\right)\\
 &  & +\sum_{\mathcal{A},\mathcal{B},\mathcal{C},\mathcal{D}=1}^k \sum_{\alpha,\beta,\gamma,\delta=1}^lX_{\mathcal{AB}}S^{\alpha\beta}Y_{\mathcal{CD}}T^{\gamma\delta}\left(G^{\mathcal{CA}}\delta_{\gamma\alpha}\phi_{\delta}^{\mathcal{D}}\phi_{\beta}^{\mathcal{B}}-G^{\mathcal{DA}}\delta_{\delta\alpha}\phi_{\gamma}^{\mathcal{C}}\phi_{\beta}^{\mathcal{B}}\right)\\
 & = & \sum_{\mathcal{A},\mathcal{B},\mathcal{C},\mathcal{D}=1}^k \sum_{\alpha,\beta,\gamma,\delta=1}^lX_{\mathcal{AB}}S^{\alpha\beta}Y_{\mathcal{CD}}T^{\gamma\delta}\left(\phi_{\alpha}^{\mathcal{A}}\phi_{\beta}^{\mathcal{B}}\phi_{\gamma}^{\mathcal{C}}\phi_{\delta}^{\mathcal{D}}-\phi_{\alpha}^{\mathcal{A}}\phi_{\beta}^{\mathcal{B}}\phi_{\gamma}^{\mathcal{C}}\phi_{\delta}^{\mathcal{D}}+G^{\mathcal{CB}}\delta_{\gamma\beta}\phi_{\alpha}^{\mathcal{A}}\phi_{\delta}^{\mathcal{D}}-G^{\mathcal{DB}}\delta_{\delta\beta}\phi_{\alpha}^{\mathcal{A}}\phi_{\gamma}^{\mathcal{C}}\right)\\
 &  & +\sum_{\mathcal{A},\mathcal{B},\mathcal{C},\mathcal{D}=1}^k \sum_{\alpha,\beta,\gamma,\delta=1}^lX_{\mathcal{AB}}S^{\alpha\beta}Y_{\mathcal{CD}}T^{\gamma\delta}\left(G^{\mathcal{CA}}\delta_{\gamma\alpha}\phi_{\delta}^{\mathcal{D}}\phi_{\beta}^{\mathcal{B}}-G^{\mathcal{DA}}\delta_{\delta\alpha}\phi_{\gamma}^{\mathcal{C}}\phi_{\beta}^{\mathcal{B}}\right)\\
 & = & \sum_{\mathcal{A},\mathcal{B},\mathcal{C},\mathcal{D}=1}^k \sum_{\alpha,\beta,\gamma,\delta=1}^lX_{\mathcal{AB}}S^{\alpha\beta}Y_{\mathcal{CD}}T^{\gamma\delta}\left(G^{\mathcal{CB}}\delta_{\gamma\beta}\phi_{\alpha}^{\mathcal{A}}\phi_{\delta}^{\mathcal{D}}-G^{\mathcal{DB}}\delta_{\delta\beta}\phi_{\alpha}^{\mathcal{A}}\phi_{\gamma}^{\mathcal{C}}+G^{\mathcal{CA}}\delta_{\gamma\alpha}\phi_{\delta}^{\mathcal{D}}\phi_{\beta}^{\mathcal{B}}-G^{\mathcal{DA}}\delta_{\delta\alpha}\phi_{\gamma}^{\mathcal{C}}\phi_{\beta}^{\mathcal{B}}\right).
\end{eqnarray*}
Using the symmetry of $G^{\mathcal{AB}}$ and of $\delta_{\alpha\beta}$ this is rearranged
to%
\begin{eqnarray*}
\left[\hat{A},\hat{B}\right] & = & \sum_{\mathcal{A},\mathcal{B},\mathcal{C},\mathcal{D}=1}^k\sum_{\alpha,\beta,\gamma,\delta=1}^l\phi_{\alpha}^{\mathcal{A}}X_{\mathcal{AB}}G^{\mathcal{BC}}Y_{\mathcal{CD}}S^{\alpha\beta}\delta_{\beta\gamma}T^{\gamma\delta}\phi_{\delta}^{\mathcal{D}}-\phi_{\alpha}^{\mathcal{A}}Y_{\mathcal{CD}}G^{\mathcal{DB}}X_{\mathcal{AB}}S^{\alpha\beta}T^{\gamma\delta}\delta_{\delta\beta}\phi_{\gamma}^{\mathcal{C}}\\
 &  & +\phi_{\delta}^{\mathcal{D}}Y_{\mathcal{CD}}G^{\mathcal{CA}}X_{\mathcal{AB}}S^{\alpha\beta}\delta_{\alpha\gamma}T^{\gamma\delta}\phi_{\beta}^{\mathcal{B}}-\phi_{\gamma}^{\mathcal{C}}Y_{\mathcal{CD}}G^{\mathcal{DA}}X_{\mathcal{AB}}T^{\gamma\delta}\delta_{\delta\alpha}S^{\alpha\beta}\phi_{\beta}^{\mathcal{B}}.
\end{eqnarray*}
This can then be transformed by renaming indices and using symmetry of the bilinear forms and anti-symmetry of the tensor product matrices (cf.\ \eqref{eq:antisymmetry-property of ansatz matrices-1}):
\begin{eqnarray*}
\left[\hat{A},\hat{B}\right] & = & \sum_{\mathcal{A},\mathcal{B},\mathcal{C},\mathcal{D}=1}^k\sum_{\alpha,\beta,\gamma,\delta=1}^l\phi_{\alpha}^{\mathcal{A}}\left(X_{\mathcal{AB}}G^{\mathcal{BC}}Y_{\mathcal{CD}}S^{\alpha\beta}\delta_{\beta\gamma}T^{\gamma\delta}-Y_{\mathcal{DC}}G^{\mathcal{CB}}X_{\mathcal{AB}}S^{\alpha\beta}T^{\delta\gamma}\delta_{\gamma\beta}\right)\phi_{\delta}^{\mathcal{D}}\\
 &  & +\phi_{\delta}^{\mathcal{D}}\left(Y_{\mathcal{CD}}G^{\mathcal{CA}}X_{\mathcal{AB}}S^{\alpha\beta}\delta_{\alpha\gamma}T^{\gamma\delta}-Y_{\mathcal{DC}}G^{\mathcal{CA}}X_{\mathcal{AB}}T^{\delta\gamma}\delta_{\gamma\alpha}S^{\alpha\beta}\right)\phi_{\beta}^{\mathcal{B}} \\
 & \stackrel{\eqref{eq:antisymmetry-property of ansatz matrices-1}}{=} & \sum_{\mathcal{A},\mathcal{B},\mathcal{C},\mathcal{D}=1}^k\sum_{\alpha,\beta,\gamma,\delta=1}^l\phi_{\alpha}^{\mathcal{A}}\left(X_{\mathcal{AB}}G^{\mathcal{BC}}Y_{\mathcal{CD}}S^{\alpha\beta}\delta_{\beta\gamma}T^{\gamma\delta}+X_{\mathcal{AB}}G^{\mathcal{BC}}Y_{\mathcal{CD}}S^{\alpha\beta}\delta_{\beta\gamma}T^{\gamma\delta}\right)\phi_{\delta}^{\mathcal{D}}\\
 &  & +\phi_{\delta}^{\mathcal{D}}\left(-Y_{\mathcal{DC}}G^{\mathcal{CA}}X_{\mathcal{AB}}T^{\delta\gamma}\delta_{\gamma\alpha}S^{\alpha\beta}-Y_{\mathcal{DC}}G^{\mathcal{CA}}X_{\mathcal{AB}}T^{\delta\gamma}\delta_{\gamma\alpha}S^{\alpha\beta}\right)\phi_{\beta}^{\mathcal{B}}\\
 & = & 2\sum_{\mathcal{A},\mathcal{D}=1}^k\sum_{\alpha,\delta=1}^l\phi_{\alpha}^{\mathcal{A}}\left(XY\right)_{\mathcal{AD}}\left(ST\right)^{\alpha\delta}\phi_{\delta}^{\mathcal{D}}-2\sum_{B,\mathcal{D}=1}^k\sum_{\beta,\delta=1}^l\phi_{\delta}^{\mathcal{D}}\left(YX\right)_{\mathcal{DB}}\left(TS\right)^{\delta\beta}\phi_{\beta}^{\mathcal{B}}\\
 & = & 2\sum_{\mathcal{A},\mathcal{D}=1}^k\sum_{\alpha,\delta=1}^l\phi_{\alpha}^{\mathcal{A}}\left[\left(XY\right)_{\mathcal{AD}}\left(ST\right)^{\alpha\delta}-\left(YX\right)_{\mathcal{AD}}\left(TS\right)^{\alpha\delta}\right]\phi_{\delta}^{\mathcal{D}}
\end{eqnarray*}
which in view of \eqref{firstpreliminarycomputation} completes the proof.
\end{proof}

\begin{rem}
In fact, we never used in the proof of Lemma~\ref{lem:commutator of bilinear ansatz} that the form $q_2$ be anisotropic. The computations hold in general for arbitrary non-degenerate forms. 
The definiteness of $q_2$ only becomes relevant now, when using the preceding lemma in order to construct various representations of $\mathfrak{k}$. The generalized spin representation $\rho : \mathfrak{k} \to \mathrm{End}(\mathbb{C}^s)$ from Corollary~\ref{cor:Construction of a gen spin rep. for simply laced case} provides anti-symmetric real $2s \times 2s$-matrices
\begin{eqnarray}
  \Gamma\left(\alpha_{i}\right):=2\rho\left(X_{i}\right) \label{remarkgamma}
\end{eqnarray}
for all simple roots $\alpha_{1},\dots,\alpha_{n}$ of $\mathfrak{g}$. Taking these as the matrix $S$ in the ansatz 
$$\hat{A} := \sum_{\mathcal{A},\mathcal{B}=1}^k\sum_{\alpha,\beta=1}^{l:=2s}X_{\mathcal{AB}}S^{\alpha\beta}\phi_{\alpha}^{\mathcal{A}}\phi_{\beta}^{\mathcal{B}}$$
of the lemma leaves one with the task of finding suitable symmetric matrices for $X$.

Note that, since we assumed $q_2$ to be anisotropic and conducted our computations with respect to an orthonormal basis for that form, the formulae given in \eqref{genspincommutator} and \eqref{genspinanticommutator} actually coincide with the standard definition of commutators and anti-commutators of matrices. In particular, the results from Proposition~\ref{prop:properties of gen spin reps in simply laced case} are applicable.
\end{rem}

\begin{defn} \label{defnlambda}
Now let $\lambda$ denote the finite set of real roots
\begin{equation}
\lambda:=\left\{ \alpha_{i} \mid 1\leq i\leq n\right\} \cup\left\{ \alpha_{i}+\alpha_{j} \in \Phi^{\mathrm{re}} \mid (i,j) \text{ form an edge of the Dynkin diagram}\right\}.\label{eq:set of sufficient roots Lambda}
\end{equation}
Note that for $\alpha,\beta\in\lambda$ one has $\left(\alpha\vert\beta\right)\in\{\pm1,0\}$.
\end{defn}

\begin{prop}
  \label{prop:criterion for root operator-1}
  A map $X : \lambda\rightarrow\mathbb{R}^{k\times k}$ that takes values in the set of symmetric matrices which satisfy for all $\alpha,\beta\in\lambda$
\begin{eqnarray}
\left[X(\alpha), X(\beta)\right] & = & 0,\quad\qquad\quad\quad\,\text{if }\left(\alpha\vert\beta\right)=0,\label{eq:commutator of X(alpha)-1}\\
\left\{ X(\alpha), X(\beta)\right\}  & = & \frac{1}{2}X(\alpha\pm\beta),\quad\text{if $\left(\alpha\vert\beta\right)=\mp1$ and $\alpha \pm \beta \in \lambda$},\label{eq:anticommutator of X(alpha)-1}
\end{eqnarray}
(with respect to the commutator and anti-commutator convention from \eqref{so91commutator} and \eqref{so91anticommutator}) 
 together with the anti-symmetric real matrices $\Gamma\left(\alpha_{1}\right),\dots,\Gamma\left(\alpha_{n}\right)$ from \eqref{remarkgamma}
turns the ansatz 
\[
\widehat{\mathrm{J}}(\alpha_{i})=\sum_{\mathcal{A},\mathcal{B}=1}^k\sum_{\alpha,\beta=1}^lX_{\mathcal{AB}}(\alpha_{i})\Gamma^{\alpha\beta}(\alpha_{i})\phi_{\alpha}^{\mathcal{A}}\phi_{\beta}^{\mathcal{B}}
\]
 into a finite-dimensional representation $\sigma$ of $\mathfrak{k}$ by defining $\sigma$ on the Berman generators $X_{1},\dots,X_{n}$ of $\mathfrak{k}$ as $\sigma\left(X_{i}\right):=\widehat{\mathrm{J}}(\alpha_{i})$. 
\end{prop}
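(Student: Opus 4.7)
The plan is to verify that the assignment $\sigma(X_i):=\widehat{\mathrm{J}}(\alpha_i)$ respects the Berman-type defining relations of $\mathfrak{k}$ recorded in Theorem~\ref{gensandrels}: $[\sigma(X_i),\sigma(X_j)]=0$ when $a_{ij}=0$, and $[\sigma(X_i),[\sigma(X_i),\sigma(X_j)]]=-\sigma(X_j)$ when $a_{ij}=-1$. Once these relations are in place, the universal property of the presentation yields a Lie algebra homomorphism from $\mathfrak{k}$ into the finite-dimensional Clifford algebra $\mathcal{S}$ regarded as a Lie algebra under the commutator, which is the desired representation. Every verification will proceed via two applications of Lemma~\ref{lem:commutator of bilinear ansatz}. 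In order to invoke the lemma on $\widehat{\mathrm{J}}(\alpha_i)$ in the first place, I note that its antisymmetry hypothesis~\eqref{eq:antisymmetry-property of ansatz matrices-1} holds because $X(\alpha_i)$ is symmetric by assumption and $\Gamma(\alpha_i)$ is antisymmetric by the construction in~\eqref{remarkgamma}, so the tensor product $X(\alpha_i)\otimes\Gamma(\alpha_i)$ is antisymmetric.

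For the non-edge case $a_{ij}=0$, one has $(\alpha_i\vert\alpha_j)=0$, and formula~\eqref{eq:commutator of bilinear ansatz-1} expresses $[\widehat{\mathrm{J}}(\alpha_i),\widehat{\mathrm{J}}(\alpha_j)]$ as a sum of two terms, each containing either $[X(\alpha_i),X(\alpha_j)]$ or $[\Gamma(\alpha_i),\Gamma(\alpha_j)]$ as a factor. The former vanishes by hypothesis~\eqref{eq:commutator of X(alpha)-1} and the latter by Proposition~\ref{prop:properties of gen spin reps in simply laced case} applied to the spin representation $\rho$ underlying the $\Gamma$'s, so the full commutator is zero.

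For the edge case $a_{ij}=-1$, we have $(\alpha_i\vert\alpha_j)=-1$, so Proposition~\ref{prop:properties of gen spin reps in simply laced case} gives $\{\Gamma(\alpha_i),\Gamma(\alpha_j)\}=0$ and hypothesis~\eqref{eq:anticommutator of X(alpha)-1} gives $\{X(\alpha_i),X(\alpha_j)\}=\tfrac{1}{2}X(\alpha_i+\alpha_j)$. Lemma~\ref{lem:commutator of bilinear ansatz} therefore collapses $[\widehat{\mathrm{J}}(\alpha_i),\widehat{\mathrm{J}}(\alpha_j)]$ to the single ansatz expression with $X$-matrix $\tfrac{1}{2}X(\alpha_i+\alpha_j)$ and $S$-matrix $[\Gamma(\alpha_i),\Gamma(\alpha_j)]=2\Gamma(\alpha_i)\Gamma(\alpha_j)$. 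The latter is antisymmetric as a product of two anticommuting antisymmetric matrices, and $X(\alpha_i+\alpha_j)$ is symmetric by hypothesis, so the antisymmetry condition of Lemma~\ref{lem:commutator of bilinear ansatz} is again satisfied and the lemma applies a second time. Using $\Gamma(\alpha_i)^2=4\rho(X_i)^2=-\mathrm{id}$ together with $\Gamma(\alpha_j)\Gamma(\alpha_i)=-\Gamma(\alpha_i)\Gamma(\alpha_j)$, a direct computation yields $\{\Gamma(\alpha_i),\Gamma(\alpha_i)\Gamma(\alpha_j)\}=0$ and $[\Gamma(\alpha_i),\Gamma(\alpha_i)\Gamma(\alpha_j)]=-2\Gamma(\alpha_j)$. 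Combined with the hypothesis in the symmetric form $\{X(\alpha_i+\alpha_j),X(\alpha_i)\}=\tfrac{1}{2}X(\alpha_j)$, valid because $(\alpha_i+\alpha_j\vert\alpha_i)=1$ and $(\alpha_i+\alpha_j)-\alpha_i=\alpha_j\in\lambda$, the double commutator collapses precisely to $-\widehat{\mathrm{J}}(\alpha_j)$, verifying the Serre-type relation.

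The main obstacle is organizational rather than conceptual: one must ensure that the intermediate commutator is again of ansatz form with an antisymmetric tensor-product matrix so that Lemma~\ref{lem:commutator of bilinear ansatz} can be invoked a second time, and then execute the Clifford simplifications of threefold $\Gamma$-products without losing signs or factors of two. Every other ingredient is supplied directly by the hypothesis on $X$ and by Proposition~\ref{prop:properties of gen spin reps in simply laced case}.
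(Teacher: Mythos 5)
Your proof is correct and follows essentially the same route as the paper: a double application of Lemma~\ref{lem:commutator of bilinear ansatz} to verify the two Berman-type relations from Theorem~\ref{gensandrels}. The only (welcome) additions are that you explicitly verify the antisymmetry hypothesis~\eqref{eq:antisymmetry-property of ansatz matrices-1} before each invocation of the lemma and that you invoke~\eqref{eq:anticommutator of X(alpha)-1} in the order $\left\{ X(\alpha_i+\alpha_j),X(\alpha_i)\right\} $ so that the resulting root $\alpha_j$ lies in $\lambda$, a point the paper passes over.
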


\begin{rem}
The observation that \eqref{eq:commutator of X(alpha)-1} and \eqref{eq:anticommutator of X(alpha)-1} are the key identities for extending generalized spin representations has been made in \cite[(4.23), p.~15; (5.1), p.~18]{AN1}.
\end{rem}

\begin{proof}[Proof of Proposition~\ref{prop:criterion for root operator-1}.]
By the homomorphism theorem it suffices to establish that the commutator $\left[\widehat{\text{J}}(\alpha_{i}),\widehat{\text{J}}(\alpha_{j})\right]$ satisfies the relations from Theorem~\ref{gensandrels}.
By Lemma~\ref{lem:commutator of bilinear ansatz} one has
\begin{eqnarray*}
\left[\widehat{\text{J}}(\alpha_{i}),\widehat{\text{J}}(\alpha_{j})\right] & = & \sum_{\mathcal{A},\mathcal{B}=1}^k\sum_{\alpha,\beta=1}^l\phi_{\alpha}^{\mathcal{A}}\left[X\left(\alpha_{i}\right),X\left(\alpha_{j}\right)\right]_{\mathcal{AB}}\left\{ \Gamma\left(\alpha_{i}\right),\Gamma\left(\alpha_{j}\right)\right\} ^{\alpha\beta}\phi_{\beta}^{\mathcal{B}}\\
 &  & +\sum_{\mathcal{A},\mathcal{B}=1}^k\sum_{\alpha,\beta=1}^l\phi_{\alpha}^{\mathcal{A}}\left\{ X\left(\alpha_{i}\right),X\left(\alpha_{j}\right)\right\} _{\mathcal{AB}}\left[\Gamma\left(\alpha_{i}\right),\Gamma\left(\alpha_{j}\right)\right]^{\alpha\beta}\phi_{\beta}^{\mathcal{B}}\ .
\end{eqnarray*}
In case $(i,j)$ is not an edge of the Dynkin diagram this yields 
\[
\left[\widehat{\text{J}}(\alpha_{i}),\widehat{\text{J}}(\alpha_{j})\right]=0
\]
as desired, because in this case one has $\left[\Gamma\left(\alpha_{i}\right),\Gamma\left(\alpha_{j}\right)\right]=0$
by Proposition~\ref{prop:properties of gen spin reps in simply laced case} and, furthermore, $\left(\alpha_{i}\vert\alpha_{j}\right)=0$, i.e., $\left[X\left(\alpha_{i}\right),X\left(\alpha_{j}\right)\right]=0$
by hypothesis \eqref{eq:commutator of X(alpha)-1}.

In case $(i,j)$ is an edge of the Dynkin diagram one has $\left\{ \Gamma\left(\alpha_{i}\right),\Gamma\left(\alpha_{j}\right)\right\} =0$ by Proposition~\ref{prop:properties of gen spin reps in simply laced case}
and so
\[
\left\{ X\left(\alpha_{i}\right),X\left(\alpha_{j}\right)\right\} =\frac{1}{2}X\left(\alpha_{i}+\alpha_{j}\right)
\]
by hypothesis \eqref{eq:anticommutator of X(alpha)-1}.
Thus,
\[
\left[\widehat{\text{J}}(\alpha_{i}),\widehat{\text{J}}(\alpha_{j})\right]=\sum_{\mathcal{A},\mathcal{B}=1}^k\sum_{\alpha,\beta=1}^l\phi_{\alpha}^{\mathcal{A}}\cdot\frac{1}{2}X\left(\alpha_{i}+\alpha_{j}\right)_{\mathcal{AB}}\left[\Gamma\left(\alpha_{i}\right),\Gamma\left(\alpha_{j}\right)\right]^{\alpha\beta}\phi_{\beta}^{\mathcal{B}}\ .
\]
Applying the commutator with $\widehat{\text{J}}(\alpha_{i})$ again according to Lemma~\ref{lem:commutator of bilinear ansatz}
yields
\begin{eqnarray*}
\left[\widehat{\text{J}}(\alpha_{i}),\left[\widehat{\text{J}}(\alpha_{i}),\widehat{\text{J}}(\alpha_{j})\right]\right] & = & \frac{1}{2}\sum_{\mathcal{A},\mathcal{B}=1}^k\sum_{\alpha,\beta=1}^l\phi_{\alpha}^{\mathcal{A}}\left[X\left(\alpha_{i}\right), X\left(\alpha_{i}+\alpha_{j}\right)\right]_{\mathcal{AB}}\left\{ \Gamma\left(\alpha_{i}\right),\left[\Gamma\left(\alpha_{i}\right),\Gamma\left(\alpha_{j}\right)\right]\right\} ^{\alpha\beta}\phi_{\beta}^{\mathcal{B}}\\
 &  & +\frac{1}{2}\sum_{\mathcal{A},\mathcal{B}=1}^k\sum_{\alpha,\beta=1}^l\phi_{\alpha}^{\mathcal{A}}\left\{ X\left(\alpha_{i}\right),X\left(\alpha_{i}+\alpha_{j}\right)\right\} _{\mathcal{AB}}\left[\Gamma\left(\alpha_{i}\right),\left[\Gamma\left(\alpha_{i}\right),\Gamma\left(\alpha_{j}\right)\right]\right]^{\alpha\beta}\phi_{\beta}^{\mathcal{B}}\ .
\end{eqnarray*}
Since $\left(\alpha_{i}\vert\alpha_{i}+\alpha_{j}\right)=1$, by hypothesis \eqref{eq:anticommutator of X(alpha)-1} one has
\[
\left\{ X\left(\alpha_{i}\right),X\left(\alpha_{i}+\alpha_{j}\right)\right\} =\frac{1}{2}X\left(\alpha_{j}\right).
\]
Moreover,
\[
\left[\Gamma\left(\alpha_{i}\right),\left[\Gamma\left(\alpha_{i}\right),\Gamma\left(\alpha_{j}\right)\right]\right]=-4\Gamma\left(\alpha_{j}\right)
\]
because $\rho(X_{i})=\frac{1}{2}\Gamma(\alpha_{i})$ is a generalized
spin representation of $\mathfrak{k}$ (cf.\ Proposition~\ref{prop:properties of gen spin reps in simply laced case} and its proof). Furthermore,
\begin{eqnarray*}
\left\{ \Gamma\left(\alpha_{i}\right),\left[\Gamma\left(\alpha_{i}\right),\Gamma\left(\alpha_{j}\right)\right]\right\} & = & \Gamma\left(\alpha_{i}\right)\Gamma\left(\alpha_{i}\right)\Gamma\left(\alpha_{j}\right)-\Gamma\left(\alpha_{i}\right)\Gamma\left(\alpha_{j}\right)\Gamma\left(\alpha_{i}\right)\\
 &  & +\Gamma\left(\alpha_{i}\right)\Gamma\left(\alpha_{j}\right)\Gamma\left(\alpha_{i}\right)-\Gamma\left(\alpha_{j}\right)\Gamma\left(\alpha_{i}\right)\Gamma\left(\alpha_{i}\right)\\
 & = & 0,
\end{eqnarray*}
because $\Gamma\left(\alpha_{i}\right)\Gamma\left(\alpha_{i}\right)$ commutes with $\Gamma\left(\alpha_{j}\right)$ (cf.\ Corollary~\ref{cor:Sufficient conditions for gen spin rep}).
Altogether,
\begin{eqnarray*}
\left[\widehat{\text{J}}(\alpha_{i}),\left[\widehat{\text{J}}(\alpha_{i}),\widehat{\text{J}}(\alpha_{j})\right]\right] & = & \frac{1}{2}\sum_{\mathcal{A},\mathcal{B}=1}^k\sum_{\alpha,\beta=1}^l\phi_{\alpha}^{\mathcal{A}}\frac{1}{2}X\left(\alpha_{j}\right)_{\mathcal{AB}}\left(-4\Gamma\left(\alpha_{j}\right)\right)^{\alpha\beta}\phi_{\beta}^{\mathcal{B}}\\
 & = & -\sum_{\mathcal{A},\mathcal{B}}\phi_{\alpha}^{\mathcal{A}}X\left(\alpha_{j}\right)_{\mathcal{AB}}\Gamma\left(\alpha_{j}\right)^{\alpha\beta}\phi_{\beta}^{\mathcal{B}}\\
 & = & -\widehat{\text{J}}(\alpha_{j}),
\end{eqnarray*}
again as desired in view of Theorem~\ref{gensandrels}.

One concludes that the assignment
\[
\sigma\left(X_{i}\right):=\widehat{\text{J}}(\alpha_{i})
\]
defines a finite-dimensional representation of $\mathfrak{k}$.
\end{proof}
%

\section{Extending a generalized
$\frac{1}{2}$-spin representation \\ --- a coordinate-free approach}    \label{sub:Extending-a-generalized2}

In this section we discuss a coordinate-free version of Proposition~\ref{prop:criterion for root operator-1}.
We stress that in this section we make use of the usual definition of (anti-)commutators: For endomorphisms of a real vector space  $V$ define the (anti-)commutator as
\begin{eqnarray*}
\left[\cdot,\cdot\right] :  \mathrm{End}(V)\times\mathrm{End}(V) & \rightarrow & \mathrm{End}(V)\\
\left[A,B\right] & \mapsto & A\circ B-B\circ A
\end{eqnarray*}
and 
\begin{eqnarray*}
\left\{ \cdot,\cdot\right\} : \mathrm{End}(V) \times\mathrm{End}(V) & \rightarrow & \mathrm{End}(V)\\
\left\{ A, B\right\}  & \mapsto & A\circ B+B\circ A
\end{eqnarray*}
where $A,B\in\mathrm{End}(V)$ and $\circ$ denotes concatenation of
(linear) maps.

As in Definition~\ref{defnlambda} let
\[
\lambda:=\left\{ \alpha_{i} \mid 1\leq i\leq n\right\} \cup\left\{ \alpha_{i}+\alpha_{j} \in \Phi^{\mathrm{re}} \mid (i,j) \text{ form an edge of the Dynkin diagram}\right\} .
\]

\begin{prop} \label{coordinatefree}
A map $X : \lambda\rightarrow\mathrm{End}\left(V\right)$ satisfying for all $\alpha,\beta\in\lambda$
\begin{eqnarray}
\left[X(\alpha), X(\beta)\right] & = & 0\quad\qquad\quad\quad\,\text{if }\left(\alpha\vert\beta\right)=0\label{eq:commutator for X as endo}\\
\left\{ X(\alpha),X(\beta)\right\}  & = & X(\alpha\pm\beta)\quad\text{if $\left(\alpha\vert\beta\right)=\mp1$ and $\alpha \pm \beta \in \lambda$} \label{eq:anticommutator for X as endo}
\end{eqnarray}
provides
a finite-dimensional representation $\sigma$ of $\mathfrak{k}$
via the assignment $$\sigma\left(X_{i}\right):=X\left(\alpha_{i}\right)\otimes\Gamma\left(\alpha_{i}\right)\in\mathrm{End}\left(V\otimes S\right)$$ on the Berman generators $X_{1},\dots,X_{n}$ of $\mathfrak{k}$, where the $\Gamma(\alpha_i)$, $1 \leq i \leq n$ are the anti-symmetric real matrices from \eqref{remarkgamma}.
\end{prop}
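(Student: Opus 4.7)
The plan is to verify the Berman relations from Theorem~\ref{gensandrels} directly for the operators $\sigma(X_i) := X(\alpha_i) \otimes \Gamma(\alpha_i)$, using as the coordinate-free substitute for Lemma~\ref{lem:commutator of bilinear ansatz} the elementary tensor identity
\[
[A \otimes B, A' \otimes B'] \;=\; \tfrac{1}{2}\{A, A'\} \otimes [B, B'] + \tfrac{1}{2}[A, A'] \otimes \{B, B'\},
\]
valid for all $A,A' \in \mathrm{End}(V)$ and $B,B' \in \mathrm{End}(S)$. This is a one-line expansion and plays exactly the role that the index calculation in Section~\ref{sub:Extending-a-generalized} played, but without choosing bases.

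The non-edge case $(\alpha_i\vert\alpha_j)=0$ is then immediate: hypothesis \eqref{eq:commutator for X as endo} gives $[X(\alpha_i),X(\alpha_j)]=0$, while Proposition~\ref{prop:properties of gen spin reps in simply laced case} applied to $\rho(X_i)=\tfrac{1}{2}\Gamma(\alpha_i)$ yields $[\Gamma(\alpha_i),\Gamma(\alpha_j)]=0$. Both summands of the tensor identity vanish, so $[\sigma(X_i),\sigma(X_j)]=0$, as required.

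For the edge case $(\alpha_i\vert\alpha_j)=-1$, I would use \eqref{eq:anticommutator for X as endo} together with Proposition~\ref{prop:properties of gen spin reps in simply laced case} to obtain $\{X(\alpha_i),X(\alpha_j)\}=X(\alpha_i+\alpha_j)$ and $\{\Gamma(\alpha_i),\Gamma(\alpha_j)\}=0$, so that the tensor identity collapses to
\[
[\sigma(X_i),\sigma(X_j)] \;=\; \tfrac{1}{2}\,X(\alpha_i+\alpha_j)\otimes[\Gamma(\alpha_i),\Gamma(\alpha_j)].
\]
Applying the tensor identity a second time, with $A'\otimes B':=X(\alpha_i+\alpha_j)\otimes[\Gamma(\alpha_i),\Gamma(\alpha_j)]$, expresses the double commutator $[\sigma(X_i),[\sigma(X_i),\sigma(X_j)]]$ as a sum of two terms. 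The two Clifford-side facts $\{\Gamma(\alpha_i),[\Gamma(\alpha_i),\Gamma(\alpha_j)]\}=0$ and $[\Gamma(\alpha_i),[\Gamma(\alpha_i),\Gamma(\alpha_j)]]=-4\Gamma(\alpha_j)$ already established in the proof of Proposition~\ref{prop:criterion for root operator-1} kill the $[X(\alpha_i),X(\alpha_i+\alpha_j)]$-summand and leave
\[
[\sigma(X_i),[\sigma(X_i),\sigma(X_j)]] \;=\; -\{X(\alpha_i),X(\alpha_i+\alpha_j)\}\otimes\Gamma(\alpha_j).
\]

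The only step requiring care, and the one I expect to be the main bookkeeping obstacle, is invoking \eqref{eq:anticommutator for X as endo} on $\{X(\alpha_i),X(\alpha_i+\alpha_j)\}$: one has $(\alpha_i+\alpha_j\vert\alpha_i)=2-1=1$, so the hypothesis must be read with $\alpha:=\alpha_i+\alpha_j$, $\beta:=\alpha_i$, and the minus sign; the admissibility condition $\alpha-\beta=\alpha_j\in\lambda$ is then satisfied because $\alpha_j$ is simple, giving $\{X(\alpha_i),X(\alpha_i+\alpha_j)\}=X(\alpha_j)$. Substituting back yields $[\sigma(X_i),[\sigma(X_i),\sigma(X_j)]]=-X(\alpha_j)\otimes\Gamma(\alpha_j)=-\sigma(X_j)$, so all defining relations of Theorem~\ref{gensandrels} hold and $\sigma$ extends uniquely to a finite-dimensional representation of $\mathfrak{k}$ on $V\otimes S$.
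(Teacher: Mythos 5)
Your proof is correct and follows essentially the same route as the paper: it verifies the Berman relations of Theorem~\ref{gensandrels} for $\sigma(X_i)=X(\alpha_i)\otimes\Gamma(\alpha_i)$ directly, invoking the same hypotheses on $X$ and the same Clifford-side facts, and it handles the sign/admissibility check for $\{X(\alpha_i),X(\alpha_i+\alpha_j)\}=X(\alpha_j)$ exactly as required. The only difference is cosmetic: you package the bookkeeping into the single symmetric tensor identity $[A\otimes B,\,A'\otimes B']=\tfrac{1}{2}\{A,A'\}\otimes[B,B']+\tfrac{1}{2}[A,A']\otimes\{B,B'\}$ and apply it uniformly, whereas the paper performs case-tailored add-and-subtract expansions for the single commutator and a direct three-factor expansion for the double commutator; the resulting computations match line for line.
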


\begin{rem} \label{Paul}
  \begin{enumerate}
\item Defining $X : \lambda\rightarrow\mathrm{End}\left(\mathbb{R}\right) = \mathbb{R}$ as the constant map $X \equiv \frac{1}{2}$ provides the generalized spin representation from \cite{Hainke}, cf.\ Corollary~\ref{cor:Sufficient conditions for gen spin rep}. On the other hand, in the approach taken by Kleinschmidt and Nicolai described in Proposition~\ref{prop:criterion for root operator-1} one needs to define $X : \lambda\rightarrow\mathbb{R}$ as the constant map $X \equiv \frac{1}{4}$ in order to obtain the generalized spin representation from \cite{Hainke}. This difference in normalization stems from the differences in normalizations of the underlying Clifford algebras when comparing \cite[Example~3.2]{Hainke} with \eqref{differenceinnormalization} on page \pageref{differenceinnormalization}. Similar differences are visible in the formulae for the $\frac{3}{2}$-spin representations given in \eqref{3/2-identity} on page \pageref{3/2-identity} and \eqref{formula32} on page \pageref{formula32} below.
\item Contrary to Proposition~\ref{prop:criterion for root operator-1}, the above coordinate-free version does not require the map $X : \lambda \to \mathrm{End}(V)$ to take images in the set of self-adjoint/symmetric operators.
\item Paul Levy pointed out to us the following. Let $W$ be the Weyl group of $\mathfrak{g}$ and let $\rho : W \to \mathrm{GL}(V)$ be a representation. The ansatz $X(\alpha) := \rho(s_\alpha) - \mathrm{id}$ leads to
  \begin{eqnarray*}
    && \rho(s_\alpha s_\beta) + \rho(s_\beta s_\alpha) - \rho(s_\alpha) - \rho(s_\beta) + \frac{1}{2} \mathrm{id} \\
    & = & \left(\rho(s_\alpha) - \frac{1}{2} \mathrm{id}\right)\left(\rho(s_\beta) - \frac{1}{2} \mathrm{id}\right) + \left(\rho(s_\beta) - \frac{1}{2} \mathrm{id}\right)\left(\rho(s_\alpha) - \frac{1}{2} \mathrm{id}\right) \\
    & = & \left\{ X(\alpha), X(\beta) \right\} \\
    & = & X(\alpha+\beta) \\
    & = & \rho(s_{\alpha+\beta}) - \frac{1}{2} \mathrm{id} \\
    & = & \rho(s_\alpha s_\beta s_\alpha) - \frac{1}{2} \mathrm{id}
  \end{eqnarray*}  
  for each pair $\alpha$, $\beta$ forming an $A_2$-subdiagram. One concludes that $$\left\{ X(\alpha), X(\beta) \right\} =  X(\alpha+\beta)$$ in fact is equivalent to
\begin{eqnarray}
  \rho(s_\alpha s_\beta s_\alpha) - \rho(s_\alpha s_\beta) - \rho(s_\beta s_\alpha) + \rho(s_\alpha) + \rho(s_\beta) - \mathrm{id} = 0. \label{weylidentity}
  \end{eqnarray}
Similar computations imply that in fact any case covered by \eqref{eq:anticommutator for X as endo} using the ansatz $X(\alpha) := \rho(s_\alpha) - \mathrm{id}$ is equivalent to \eqref{weylidentity}. Furthermore, one quickly computes that $[\rho(s_\alpha) - \mathrm{id},\rho(s_\beta) - \mathrm{id}] = 0$ whenever $(\alpha|\beta)=0$, because this is equivalent to $s_\alpha s_\beta = s_\beta s_\alpha$. We conclude that for the ansatz $X(\alpha) := \rho(s_\alpha) - \mathrm{id}$ it suffices to check \eqref{weylidentity} for each pair $\alpha$, $\beta$ forming an $A_2$-subdiagram.
\item Paul Levy also pointed out to us that the identity $$\rho(s_\alpha s_\beta s_\alpha) - \rho(s_\alpha s_\beta) - \rho(s_\beta s_\alpha) + \rho(s_\alpha) + \rho(s_\beta) - \mathrm{id} = 0$$ holds if and only if the given representation $W \geq \mathrm{Sym}_3 = \langle s_\alpha, s_\beta \rangle \to \mathrm{GL}(V) : w \mapsto \rho(w)$ does not contain a sign representation as an irreducible component. Indeed, among the irreducible representations of $\mathrm{Sym}_3$ the trivial and the geometric representations satisfy $\eqref{weylidentity}$ whereas the sign representation does not. 
\end{enumerate}
  \end{rem}

\begin{proof}[Proof of Proposition~\ref{coordinatefree}.]
By the homomorphism theorem it suffices to establish that the commutator $\left[\sigma\left(X_{i}\right),\sigma\left(X_{j}\right)\right]$ satisfies the relations from Theorem~\ref{gensandrels}.

  In case $(i,j)$ do not form an edge, one computes the following: 
\begin{eqnarray*}
\left[\sigma\left(X_{i}\right),\sigma\left(X_{j}\right)\right] & = & \left(X\left(\alpha_{i}\right)\otimes\Gamma\left(\alpha_{i}\right)\right)\circ\left(X\left(\alpha_{j}\right)\otimes\Gamma\left(\alpha_{j}\right)\right)\\
 &  & -\left(X\left(\alpha_{j}\right)\otimes\Gamma\left(\alpha_{j}\right)\right)\circ\left(X\left(\alpha_{i}\right)\otimes\Gamma\left(\alpha_{i}\right)\right)\\
 & = & X\left(\alpha_{i}\right)X\left(\alpha_{j}\right)\otimes\Gamma\left(\alpha_{i}\right)\Gamma\left(\alpha_{j}\right)-X\left(\alpha_{j}\right)X\left(\alpha_{i}\right)\otimes\Gamma\left(\alpha_{j}\right)\Gamma\left(\alpha_{i}\right)\\
 & = & X\left(\alpha_{i}\right)X\left(\alpha_{j}\right)\otimes\Gamma\left(\alpha_{i}\right)\Gamma\left(\alpha_{j}\right)-X\left(\alpha_{j}\right)X\left(\alpha_{i}\right)\otimes\Gamma\left(\alpha_{i}\right)\Gamma\left(\alpha_{j}\right)\\
 &  & +X\left(\alpha_{j}\right)X\left(\alpha_{i}\right)\otimes\Gamma\left(\alpha_{i}\right)\Gamma\left(\alpha_{j}\right)-X\left(\alpha_{j}\right)X\left(\alpha_{i}\right)\otimes\Gamma\left(\alpha_{j}\right)\Gamma\left(\alpha_{i}\right)\\
 & = & \left[X\left(\alpha_{i}\right),X\left(\alpha_{j}\right)\right]\otimes\Gamma\left(\alpha_{i}\right)\Gamma\left(\alpha_{j}\right)+X\left(\alpha_{j}\right)X\left(\alpha_{i}\right)\otimes\left[\Gamma\left(\alpha_{i}\right),\Gamma\left(\alpha_{j}\right)\right]\\
 & = & 0,
\end{eqnarray*}
because $\left[X(\alpha_i), X(\alpha_j)\right] = 0$ by hypothesis \eqref{eq:commutator for X as endo} and $\left[\Gamma\left(\alpha_{i}\right),\left(\alpha_{j}\right)\right]=0$ by Proposition~\ref{prop:properties of gen spin reps in simply laced case}.

In case $(i,j)$ is an edge, Proposition~\ref{prop:properties of gen spin reps in simply laced case} and hypothesis \eqref{eq:anticommutator for X as endo} yield
\[
\left\{ \Gamma\left(\alpha_{i}\right),\Gamma\left(\alpha_{j}\right)\right\} =0 \quad\quad \text{and} \quad\quad
\left\{ X\left(\alpha_{i}\right),X\left(\alpha_{j}\right)\right\} =X\left(\alpha_{i}+\alpha_{j}\right).
\]
Hence
\begin{eqnarray*}
\left[\sigma\left(X_{i}\right),\sigma\left(X_{j}\right)\right] & = & X\left(\alpha_{i}\right)X\left(\alpha_{j}\right)\otimes\Gamma\left(\alpha_{i}\right)\Gamma\left(\alpha_{j}\right)-X\left(\alpha_{j}\right)X\left(\alpha_{i}\right)\otimes\Gamma\left(\alpha_{j}\right)\Gamma\left(\alpha_{i}\right)\\
 & = & X\left(\alpha_{i}\right)X\left(\alpha_{j}\right)\otimes\Gamma\left(\alpha_{i}\right)\Gamma\left(\alpha_{j}\right)+X\left(\alpha_{j}\right)X\left(\alpha_{i}\right)\otimes\Gamma\left(\alpha_{i}\right)\Gamma\left(\alpha_{j}\right)\\
 &  & -X\left(\alpha_{j}\right)X\left(\alpha_{i}\right)\otimes\Gamma\left(\alpha_{i}\right)\Gamma\left(\alpha_{j}\right)-X\left(\alpha_{j}\right)X\left(\alpha_{i}\right)\otimes\Gamma\left(\alpha_{j}\right)\Gamma\left(\alpha_{i}\right)\\
 & = & \left\{ X\left(\alpha_{i}\right),X\left(\alpha_{j}\right)\right\} \otimes\Gamma\left(\alpha_{i}\right)\Gamma\left(\alpha_{j}\right)-X\left(\alpha_{j}\right)X\left(\alpha_{i}\right)\otimes\left\{ \Gamma\left(\alpha_{i}\right),\Gamma\left(\alpha_{j}\right)\right\} \\
 & = & X\left(\alpha_{i}+\alpha_{j}\right)\otimes\Gamma\left(\alpha_{i}\right)\Gamma\left(\alpha_{j}\right).
\end{eqnarray*}
Moreover, since the matrices $\frac{1}{2}\Gamma(\alpha_1)$, ..., $\frac{1}{2}\Gamma(\alpha_n)$ provide a generalized spin representation, by definition one has $\Gamma\left(\alpha_{i}\right)^{2}=4\rho\left(X_{i}\right)^{2}=-\mathrm{id}_{S}$
and by Proposition~\ref{prop:properties of gen spin reps in simply laced case} the matrices $\Gamma\left(\alpha_{i}\right)$ and $\Gamma\left(\alpha_{j}\right)$
anti-commute. Therefore one has the following:
\begin{eqnarray*}
\left[\sigma\left(X_{i}\right),\left[\sigma\left(X_{i}\right),\sigma\left(X_{j}\right)\right]\right] & = & \left(X\left(\alpha_{i}\right)\otimes\Gamma\left(\alpha_{i}\right)\right)\circ\left(X\left(\alpha_{i}+\alpha_{j}\right)\otimes\Gamma\left(\alpha_{i}\right)\Gamma\left(\alpha_{j}\right)\right)\\
 &  & -\left(X\left(\alpha_{i}+\alpha_{j}\right)\otimes\Gamma\left(\alpha_{i}\right)\Gamma\left(\alpha_{j}\right)\right)\circ\left(X\left(\alpha_{i}\right)\otimes\Gamma\left(\alpha_{i}\right)\right)\\
 & = & \left(X\left(\alpha_{i}\right)X\left(\alpha_{i}+\alpha_{j}\right)\right)\otimes\left(\Gamma\left(\alpha_{i}\right)\Gamma\left(\alpha_{i}\right)\Gamma\left(\alpha_{j}\right)\right)\\
 &  & -\left(X\left(\alpha_{i}+\alpha_{j}\right)X\left(\alpha_{i}\right)\right)\otimes\left(\Gamma\left(\alpha_{i}\right)\Gamma\left(\alpha_{j}\right)\Gamma\left(\alpha_{i}\right)\right)\\
 & = & -\left(X\left(\alpha_{i}\right)X\left(\alpha_{i}+\alpha_{j}\right)\right)\otimes\Gamma\left(\alpha_{j}\right)\\
 &  & -\left(X\left(\alpha_{i}+\alpha_{j}\right)X\left(\alpha_{i}\right)\right)\otimes\Gamma\left(\alpha_{j}\right)\\
 & = & -\left\{ X\left(\alpha_{i}\right),X\left(\alpha_{i}+\alpha_{j}\right)\right\} \otimes\Gamma\left(\alpha_{j}\right)\\
 & = & -X\left(\alpha_{j}\right)\otimes\Gamma\left(\alpha_{j}\right)=-\sigma\left(X_{j}\right). \qedhere
\end{eqnarray*}
\end{proof}

\section{Towards $\frac{3}{2}$-spin representations} \label{3/2}

Let $V:=\mathfrak{h}^{\ast}$. If the generalized Cartan matrix
$A$ is invertible, then $V=\text{span}_{\mathbb{R}}\{\alpha_{1},\dots,\alpha_{n}\}$; otherwise $V$ is of higher dimension $k := 2n-\mathrm{rk}(A)$. In both cases the invariant bilinear form on $\mathfrak{g}$ induces a nondegenerate bilinear form $\left(\cdot\vert\cdot\right)$ on $V$. Let $v^1$, \dots, $v^k$ be a basis of $V$ and define
\[
G^{ab}:=\left(v^{a},v^{b}\right).
\]
That is, $(G^{ab})_{1 \leq a, b \leq k}$ is the Gram matrix of the bilinear form $\left(\cdot\vert\cdot\right)$ on $V$ with respect to the basis $v^1$,\dots, $v^k$. Moreover, define $(G_{ab})_{1 \leq a, b \leq k} := {(G^{ab})_{1 \leq a, b \leq k}}^{-1}$, i.e.,
\[
\sum_{b=1}^kG^{ab}G_{bc}=\delta_{ac}.
\]
Note that Cramer's rule implies that also the matrix $(G_{ab})_{1 \leq a, b \leq k}$ is symmetric.

\begin{prop}\label{prop:the spin 3/2 extension}
  Let $\alpha=\sum_{i=1}^k\alpha_iv^i \in V = \mathfrak{h}^*$ be a real root. Then the map
\begin{eqnarray*}
  X : \lambda & \rightarrow & \mathbb{R}^{k\times k} \\ \alpha & \mapsto & (X(\alpha)_{ab})_{1 \leq a,b \leq k}
  \end{eqnarray*}
defined via
\begin{eqnarray}
X(\alpha)_{ab} & = & -\frac{1}{2}\alpha_{a}\alpha_{b}+\frac{1}{4}G_{ab}.  \label{3/2-identity}
\end{eqnarray}
yields a set of matrices that satisfy hypotheses \eqref{eq:commutator of X(alpha)-1}
and \eqref{eq:anticommutator of X(alpha)-1} of Proposition~\ref{prop:criterion for root operator-1}. In particular, this provides a finite-dimensional representation of $\mathfrak{k}$ via 
$\sigma(X_{i})= X\left(\alpha_{i}\right)\otimes\Gamma\left(\alpha_{i}\right)$.
\end{prop}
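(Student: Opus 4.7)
The plan is to verify the two hypotheses of Proposition~\ref{prop:criterion for root operator-1} directly, by computing the twisted product $(X(\alpha) X(\beta))_{AB}$ with respect to the multiplication convention fixed in \eqref{so91commutator} and \eqref{so91anticommutator} and then symmetrizing/antisymmetrizing in $\alpha \leftrightarrow \beta$. Symmetry of $X(\alpha)$ as required by Proposition~\ref{prop:criterion for root operator-1} is immediate, since both $\alpha_a \alpha_b$ and $G_{ab}$ are symmetric in $a,b$.

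For the main computation I would expand
$\sum_{C,D}\bigl(-\tfrac{1}{2}\alpha_A \alpha_C + \tfrac{1}{4} G_{AC}\bigr) G^{CD} \bigl(-\tfrac{1}{2}\beta_D \beta_B + \tfrac{1}{4} G_{DB}\bigr)$
into four summands. Three of them collapse under the elementary identities $\sum_D G^{CD} G_{DB} = \delta^C_B$ and $\sum_{C,D} \alpha_C G^{CD} \beta_D = (\alpha\vert\beta)$, while the last one reduces via $\sum_{C,D} G_{AC} G^{CD} G_{DB} = G_{AB}$. The resulting formula naturally splits into a term proportional to $(\alpha\vert\beta)\,\alpha_A \beta_B$, which is the only piece that is not symmetric in the exchange $\alpha \leftrightarrow \beta$, plus a remainder that is manifestly symmetric under this exchange.

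Consequently, the twisted commutator kills the symmetric part and reduces to a multiple of $(\alpha\vert\beta)(\alpha_A \beta_B - \beta_A \alpha_B)$; this vanishes whenever $(\alpha\vert\beta)=0$, giving \eqref{eq:commutator of X(alpha)-1}. For the twisted anti-commutator I expect to obtain an expression of the shape $\tfrac{1}{4}(\alpha\vert\beta)(\alpha_A\beta_B+\beta_A\alpha_B)-\tfrac{1}{4}(\alpha_A\alpha_B+\beta_A\beta_B)+\tfrac{1}{8}G_{AB}$. Substituting $(\alpha\vert\beta)=\mp 1$ and completing the square in $\alpha \pm \beta$, the first two terms recombine into $-\tfrac{1}{4}(\alpha_A \pm \beta_A)(\alpha_B \pm \beta_B)$. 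Using that $\alpha \pm \beta$ has coordinates $\alpha_i \pm \beta_i$ in the basis $v^1,\dots,v^k$, this matches $\tfrac{1}{2}X(\alpha \pm \beta)_{AB}$ exactly, yielding \eqref{eq:anticommutator of X(alpha)-1}. Since $\alpha \pm \beta \in \lambda$ is assumed in that case, the value $X(\alpha \pm \beta)$ is defined.

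I expect the only real obstacle to be purely bookkeeping: keeping the ``twisted'' product \eqref{so91commutator} distinct from ordinary matrix multiplication, and not conflating $G^{ab}$ with its inverse $G_{ab}$ when contracting indices. Once the two hypotheses are verified, Proposition~\ref{prop:criterion for root operator-1} applies directly and produces the claimed finite-dimensional representation $\sigma$ of $\mathfrak{k}$ via $\sigma(X_i) = \widehat{\mathrm{J}}(\alpha_i)$ built from $X(\alpha_i)$ and the matrices $\Gamma(\alpha_i)$ from \eqref{remarkgamma}.
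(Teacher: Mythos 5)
Your proposal is correct and follows essentially the same route as the paper: you expand the twisted product $\sum_{c,d} X(\alpha)_{ac}\, G^{cd}\, X(\beta)_{db}$, contract using $\sum_d G^{cd}G_{db}=\delta_{cb}$ and $\sum_{c,d}\alpha_c G^{cd}\beta_d = (\alpha\vert\beta)$, and then symmetrize/antisymmetrize in $\alpha\leftrightarrow\beta$, which is exactly what the paper does term-by-term. The small organizational refinement you add --- isolating the unique non-symmetric-in-$(\alpha,\beta)$ term $\tfrac{1}{4}(\alpha\vert\beta)\alpha_a\beta_b$ so the commutator computation collapses at once --- is valid but does not constitute a genuinely different argument.
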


\begin{rem}
Formula \eqref{3/2-identity} is \cite[(4.21), p.~15]{AN1}.
\end{rem}

\begin{proof}[Proof of Proposition~\ref{prop:the spin 3/2 extension}]
It suffices to establish the hypotheses of Proposition~\ref{prop:criterion for root operator-1}. Note first that the matrices $X(\alpha)$ are symmetric by definition.
For $\alpha, \beta \in \lambda$ with 
$\left(\alpha\vert\beta\right)=0$ one computes
\begin{eqnarray*}
\left[X(\alpha),X(\beta)\right]_{ad} & \stackrel{\eqref{so91commutator}}{=} & \sum_{b,c=1}^k\left(-\frac{1}{2}\alpha_{a}\alpha_{b}+\frac{1}{4}G_{ab}\right)G^{bc}\left(-\frac{1}{2}\beta_{c}\beta_{d}+\frac{1}{4}G_{cd}\right)\\
 &  & -\sum_{b,c=1}^k\left(-\frac{1}{2}\beta_{a}\beta_{b}+\frac{1}{4}G_{ab}\right)G^{bc}\left(-\frac{1}{2}\alpha_{c}\alpha_{d}+\frac{1}{4}G_{cd}\right)\\
 & = & \sum_{b,c=1}^k \left(\frac{1}{4}\alpha_{a}\alpha_{b}G^{bc}\beta_{c}\beta_{d}-\frac{1}{4}\beta_{a}\beta_{b}G^{bc}\alpha_{c}\alpha_{d}-\frac{1}{8}\alpha_{a}\alpha_{b}G^{bc}G_{cd} \right.\\
 &  & + \frac{1}{8}\beta_{a}\beta_{b}G^{bc}G_{cd}-\frac{1}{8}G_{ab}G^{bc}\beta_{c}\beta_{d}+\frac{1}{8}G_{ab}G^{bc}\alpha_{c}\alpha_{d}\\
 &  & + \left. \frac{1}{16}G_{ab}G^{bc}G_{cd}-\frac{1}{16}G_{ab}G^{bc}G_{cd} \right) \\
 & = & \frac{1}{4}\alpha_{a}\left(\alpha\vert\beta\right)\beta_{d}-\frac{1}{4}\beta_{a}\left(\alpha\vert\beta\right)\alpha_{d}-\frac{1}{8}\alpha_{a}\alpha_{d}+\frac{1}{8}\beta_{a}\beta_{d}  \\
 &  & -\frac{1}{8}\beta_{a}\beta_{d}+\frac{1}{8}\alpha_{a}\alpha_{d} \\
 & = & 0.
\end{eqnarray*}
Moreover, for $\left(\alpha\vert\beta\right)=\mp1$ one computes
\begin{eqnarray*}
\left\{ X(\alpha),X(\beta)\right\} _{ad} & \stackrel{\eqref{so91anticommutator}}{=} & \sum_{b,c=1}^k \left(-\frac{1}{2}\alpha_{a}\alpha_{b}+\frac{1}{4}G_{ab}\right)G^{bc}\left(-\frac{1}{2}\beta_{c}\beta_{d}+\frac{1}{4}G_{cd}\right)\\
 &  & + \sum_{b,c=1}^k\left(-\frac{1}{2}\beta_{a}\beta_{b}+\frac{1}{4}G_{ab}\right)G^{bc}\left(-\frac{1}{2}\alpha_{c}\alpha_{d}+\frac{1}{4}G_{cd}\right)\\
 & = &  \frac{1}{4}\alpha_{a}\beta_{d}\left(\alpha\vert\beta\right)-\frac{1}{8}\alpha_{a}\alpha_{d}-\frac{1}{8}\beta_{a}\beta_{d}+\frac{1}{16}G_{ab}G^{bc}G_{cd} \\
 &  & +\frac{1}{4}\beta_{a}\alpha_{d}\left(\alpha\vert\beta\right)-\frac{1}{8}\beta_{a}\beta_{d}-\frac{1}{8}\alpha_{a}\alpha_{d}+\frac{1}{16}G_{ab}G^{bc}G_{cd} \\
 & = & \frac{1}{4}\left(-\alpha_{a}\alpha_{d}-\beta_{a}\beta_{d}\mp\left(\alpha_{a}\beta_{d}+\beta_{a}\alpha_{d}\right)+\frac{1}{2}G_{ad}\right)\\
 & = & \frac{1}{2}\left(-\frac{1}{2}\left(\alpha_{a}\pm\beta_{a}\right)\left(\alpha_{d}\pm\beta_{d}\right)+\frac{1}{4}G_{ad}\right)\\
 & = & \frac{1}{2}X(\alpha\pm\beta)_{ad}.     \qedhere
\end{eqnarray*}
\end{proof}

We conclude this section with the following coordinate-free version of Proposition~\ref{prop:the spin 3/2 extension}.

\begin{prop} \label{32coordinatefree}
For $V=\mathfrak{h}^{\ast}$ let $\left(\cdot\vert\cdot\right)$ denote
the induced invariant bilinear form on $\mathfrak{h}^{\ast}$. Define
$X : \Delta^{\mathrm{re}}\rightarrow\mathrm{End}\left(\mathfrak{h}^{\ast}\right)$
via 
\begin{eqnarray}
\alpha\mapsto X(\alpha) & := & -\alpha\left(\alpha\vert\cdot\right)+\frac{1}{2}id_{\mathfrak{h}^{\ast}}.    \label{formula32}
\end{eqnarray}
Then $X$ satisfies \eqref{eq:commutator for X as endo}  and
\eqref{eq:anticommutator for X as endo} for all real roots $\alpha,\beta$
with $\left(\alpha\vert\beta\right)\in\left\{ 0,\pm1\right\} $ and
thus provides a representation $\sigma$ of $\mathfrak{k}$.
\end{prop}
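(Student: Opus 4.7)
The plan is to invoke Proposition~\ref{coordinatefree}, whose hypotheses \eqref{eq:commutator for X as endo} and \eqref{eq:anticommutator for X as endo} need to be verified for the endomorphism $X(\alpha) = -\pi_\alpha + \frac{1}{2}\mathrm{id}_{\mathfrak{h}^{\ast}}$, where I abbreviate $\pi_\alpha := \alpha(\alpha \vert \cdot) \in \mathrm{End}(\mathfrak{h}^{\ast})$. Everything reduces to one rank-one composition identity: evaluating at an arbitrary $v \in \mathfrak{h}^{\ast}$ gives
\[
\pi_\alpha \circ \pi_\beta \;=\; (\alpha \vert \beta)\,\alpha(\beta \vert \cdot).
\]

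For the commutator relation, the summand $\frac{1}{2}\mathrm{id}$ is central, so
\[
[X(\alpha), X(\beta)] \;=\; [\pi_\alpha, \pi_\beta] \;=\; (\alpha \vert \beta)\bigl[\alpha(\beta \vert \cdot) - \beta(\alpha \vert \cdot)\bigr],
\]
which vanishes whenever $(\alpha \vert \beta) = 0$, establishing \eqref{eq:commutator for X as endo}.

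For the anti-commutator, I expand
\[
\{X(\alpha), X(\beta)\} \;=\; \{\pi_\alpha, \pi_\beta\} - \pi_\alpha - \pi_\beta + \tfrac{1}{2}\mathrm{id} \;=\; (\alpha \vert \beta)\bigl[\alpha(\beta \vert \cdot) + \beta(\alpha \vert \cdot)\bigr] - \pi_\alpha - \pi_\beta + \tfrac{1}{2}\mathrm{id},
\]
while bilinearity of $(\cdot \vert \cdot)$ yields
\[
\pi_{\alpha \pm \beta} \;=\; \pi_\alpha + \pi_\beta \pm \bigl[\alpha(\beta \vert \cdot) + \beta(\alpha \vert \cdot)\bigr],
\]
so that $X(\alpha \pm \beta) = -\pi_\alpha - \pi_\beta \mp \bigl[\alpha(\beta \vert \cdot) + \beta(\alpha \vert \cdot)\bigr] + \tfrac{1}{2}\mathrm{id}$. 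Plugging $(\alpha \vert \beta) = \mp 1$ into the first expression matches this on the nose, establishing \eqref{eq:anticommutator for X as endo}.

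The computation is essentially routine, and the only real obstacle is careful sign bookkeeping between the $\pm$ and $\mp$ conventions in \eqref{eq:anticommutator for X as endo}. Unlike the coordinatized Proposition~\ref{prop:the spin 3/2 extension}, this version avoids any manipulation of the Gram matrix $(G^{ab})$ and its inverse $(G_{ab})$: the rank-one operators $\pi_\alpha$ compose cleanly and absorb all such duality data automatically.
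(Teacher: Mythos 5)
Your proof is correct and follows essentially the same route as the paper: expand the rank-one compositions, use $\pi_\alpha \pi_\beta = (\alpha|\beta)\,\alpha(\beta|\cdot)$, and compare with the bilinear expansion of $\pi_{\alpha\pm\beta}$. The only cosmetic difference is that you introduce the abbreviation $\pi_\alpha$ and isolate the composition identity up front (notation the paper itself adopts only in the $\tfrac{5}{2}$-spin section), whereas the paper carries the full expressions $\alpha(\alpha|\cdot)$ through the computation; the underlying algebra is identical.
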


\begin{proof}
First consider $\alpha,\beta\in\Delta^{\mathrm{re}}$ such that $\left(\alpha\vert\beta\right)=0$.
Then one has 
\begin{eqnarray*}
\left[X(\alpha),X(\beta)\right] & = & \left(-\alpha\left(\alpha\vert\cdot\right)+\frac{1}{2}id_{\mathfrak{h}^{\ast}}\right)\left(-\beta\left(\beta\vert\cdot\right)+\frac{1}{2}id_{\mathfrak{h}^{\ast}}\right)\\
 &  & -\left(-\beta\left(\beta\vert\cdot\right)+\frac{1}{2}id_{\mathfrak{h}^{\ast}}\right)\left(-\alpha\left(\alpha\vert\cdot\right)+\frac{1}{2}id_{\mathfrak{h}^{\ast}}\right)\\
 & = & \alpha\left(\alpha\vert\beta\right)\left(\beta\vert\cdot\right)-\frac{1}{2}\alpha\left(\alpha\vert\cdot\right)-\frac{1}{2}\beta\left(\beta\vert\cdot\right)+\frac{1}{4}id_{\mathfrak{h}^{\ast}}\\
 &  & -\beta\left(\beta\vert\alpha\right)\left(\alpha\vert\cdot\right)+\frac{1}{2}\beta\left(\beta\vert\cdot\right)+\frac{1}{2}\alpha\left(\alpha\vert\cdot\right)-\frac{1}{4}id_{\mathfrak{h}^{\ast}}\\
 & = & 0.
\end{eqnarray*}
Moreover, for $(\alpha|\beta)=\mp1$ one has the following: 
\begin{eqnarray*}
\left\{ X(\alpha),X(\beta)\right\}  & = & \left(-\alpha\left(\alpha\vert\cdot\right)+\frac{1}{2}id_{\mathfrak{h}^{\ast}}\right)\left(-\beta\left(\beta\vert\cdot\right)+\frac{1}{2}id_{\mathfrak{h}^{\ast}}\right)\\
 &  & +\left(-\beta\left(\beta\vert\cdot\right)+\frac{1}{2}id_{\mathfrak{h}^{\ast}}\right)\left(-\alpha\left(\alpha\vert\cdot\right)+\frac{1}{2}id_{\mathfrak{h}^{\ast}}\right)\\
 & = & \alpha\left(\alpha\vert\beta\right)\left(\beta\vert\cdot\right)-\frac{1}{2}\alpha\left(\alpha\vert\cdot\right)-\frac{1}{2}\beta\left(\beta\vert\cdot\right)+\frac{1}{4}id_{\mathfrak{h}^{\ast}}\\
 &  & +\beta\left(\beta\vert\alpha\right)\left(\alpha\vert\cdot\right)-\frac{1}{2}\beta\left(\beta\vert\cdot\right)-\frac{1}{2}\alpha\left(\alpha\vert\cdot\right)+\frac{1}{4}id_{\mathfrak{h}^{\ast}}\\
 & = & \mp\alpha\left(\beta\vert\cdot\right)\mp\beta\left(\alpha\vert\cdot\right)-\alpha\left(\alpha\vert\cdot\right)-\beta\left(\beta\vert\cdot\right)+\frac{1}{2}id_{\mathfrak{h}^{\ast}}\\
 & = & -\left(\pm\alpha\left(\beta\vert\cdot\right)\pm\beta\left(\alpha\vert\cdot\right)+\alpha\left(\alpha\vert\cdot\right)+\beta\left(\beta\vert\cdot\right)\right)+\frac{1}{2}id_{\mathfrak{h}^{\ast}}\\
 & = & -\left(\alpha\pm\beta\right)\left(\alpha\pm\beta\vert\cdot\right)+\frac{1}{2}id_{\mathfrak{h}^{\ast}}\\
 & = & X\left(\alpha\pm\beta\right).  \qedhere
\end{eqnarray*}
\end{proof}

\begin{rem}
Note that the canonical (non-reduced geometric) Weyl group representation $\rho : W \to \mathrm{GL}(\mathfrak{h}^*)$ acts via $\rho(s_\alpha)(x) = x - (\alpha|x) \alpha$ and so one has $X(\alpha) = \rho(s_\alpha) - \frac{1}{2} \mathrm{id}$. Therefore Remark~\ref{Paul} applies and the statement of Proposition~\ref{32coordinatefree} in fact follows from the observation that $\rho$ (restricted to any standard subgroup $\mathrm{Sym}_3)$ does not contain the sign representation as an irreducible component.  
\end{rem}

\section{Towards $\frac{5}{2}$-spin representations}

\begin{defn}
  Let $(T_{ab})_{a,b} \in \mathbb{R}^{k \times k}$ and $(U_{ab})_{a,b} \in \mathbb{R}^{k \times l}$.
  Then
  \[ \left(T_{(ab)}\right)_{a,b} \in \mathbb{R}^{k \times k}
  \]
  denotes the matrix with components
\[
T_{(ab)}:=\frac{1}{2}T_{ab}+\frac{1}{2}T_{ba}.
\]
Moreover, for $1 \leq a, b, c \leq k$ and $1 \leq d \leq l$ define
\[ T_{a(b}U_{c)d} := \frac{1}{2} T_{ab}U_{cd} + \frac{1}{2} T_{ac}U_{bd}. \]
This notation is called the {\em symmetrizer bracket}.
\end{defn}

\begin{lem}
As in Section~\ref{3/2} let $v^1$, ..., $v^k$ be a basis of $\mathfrak{h}^*$, let $(G^{ab})_{a,b}$ be the Gram matrix of the invariant form with respect to this basis, let $(G_{ab})_{a,b}$ be its inverse, let $\alpha=\sum_{i=1}^k\alpha_iv^i, \beta=\sum_{i=1}^k\beta_iv^i \in \mathfrak{h}^*$, and let $$\alpha^i := \sum_{j=1}^k G^{ij}\alpha_j, \quad\quad \beta^i := \sum_{j=1}^k G^{ij}\beta_j.$$ Then the following identities hold:
\begin{eqnarray}
\sum_{g,h=1}^k\alpha^{g}\alpha^{h}G_{g(c}G_{d)h} & = &\alpha_{c}\alpha_{d}=\alpha_{(c}\alpha_{d)} \label{I} \\
\sum_{g,h=1}^k\alpha^{g}\alpha^{h}\beta_{(g}G_{h)(c}\beta_{d)} & = &\left(\alpha\vert\beta\right)\alpha_{(c}\beta_{d)} \label{II}\\
\sum_{e,f,g,h=1}^k\alpha_{(a}G_{b)(e}\alpha_{f)}G^{eg}G^{fh}\beta_{(g}G_{h)(c}\beta_{d)} & = &\frac{1}{2}\alpha_{(a}\beta_{b)}\alpha_{(c}\beta_{d)}+\frac{1}{2}\left(\alpha\vert\beta\right)\alpha_{(a}G_{b)(c}\beta_{d)} \label{III}\\
\sum_{e,f,g,h=1}^kG_{a(e}G_{f)b}G^{eg}G^{fh}\beta_{(g}G_{h)(c}\beta_{d)} & = &\beta_{(a}G_{b)(c}\beta_{d)} \label{IV} \\
\sum_{e,f,g,h=1}^kG_{a(e}G_{f)b}G^{eg}G^{fh}G_{g(c}G_{d)h} & = & G_{a(c}G_{d)b}.  \label{V}
\end{eqnarray}
\end{lem}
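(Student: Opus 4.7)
\medskip
Each of the five identities is a purely multilinear-algebraic statement about the Gram matrix $G^{ab}$, its inverse $G_{ab}$, and the coordinates of the vectors $\alpha,\beta$. The only substantive tool required throughout is the inverse property $\sum_{b=1}^{k} G^{ab} G_{bc} = \delta^{a}_{c}$, from which one reads off the two raising--lowering shortcuts
\begin{equation*}
\sum_{g=1}^{k} \alpha^{g} G_{gc} \;=\; \alpha_{c}, \qquad \sum_{g=1}^{k} \alpha^{g}\beta_{g} \;=\; (\alpha\vert\beta).
\end{equation*}
The uniform strategy is: expand every symmetrizer bracket according to its definition, perform the contractions using the shortcuts above, and then regroup the surviving terms to recognise the symmetrization on the right-hand side.

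The key reusable computation is the auxiliary identity
\begin{equation*}
\sum_{e,f=1}^{k} G_{a(e} G_{f)b}\, G^{eg} G^{fh} \;=\; \tfrac{1}{2}\bigl(\delta^{g}_{a}\delta^{h}_{b} + \delta^{g}_{b}\delta^{h}_{a}\bigr),
\end{equation*}
which is proved by expanding the $(e,f)$-symmetrizer and applying the inverse property twice. Conceptually, the left-hand factor acts as a symmetric projector in the upper index pair. Identity (I) follows at once by expanding $G_{g(c}G_{d)h}$ and using the first shortcut twice; identity (II) is its bilinear analogue, where each of the four terms produced by the doubly symmetrized bracket $\beta_{(g} G_{h)(c} \beta_{d)}$ collapses to $\tfrac{1}{4}(\alpha\vert\beta)\alpha_{(c}\beta_{d)}$, yielding $(\alpha\vert\beta)\alpha_{(c}\beta_{d)}$ in total. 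Identities (IV) and (V) are immediate consequences of the auxiliary identity: substituting it into the outer contraction leaves $\sum_{g,h} \tfrac{1}{2}(\delta^{g}_{a}\delta^{h}_{b} + \delta^{g}_{b}\delta^{h}_{a})\, T^{gh}$ with $T^{gh} := \beta_{(g} G_{h)(c} \beta_{d)}$, respectively $T^{gh} := G_{g(c} G_{d)h}$. Both choices of $T^{gh}$ are manifestly symmetric in $(g,h)$, so both summands contribute equally, and after using symmetry of $G$ the outcome is precisely $\beta_{(a}G_{b)(c}\beta_{d)}$, respectively $G_{a(c}G_{d)b}$.

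Identity (III) is the most intricate one, since both sides of the interior contraction $G^{eg}G^{fh}$ carry a doubly-symmetrized tensor. The plan is to first evaluate
\begin{equation*}
\sum_{e,f=1}^{k} \alpha_{(a} G_{b)(e} \alpha_{f)}\, G^{eg} G^{fh}
\end{equation*}
by expanding the outer bracket into its four summands and contracting: each summand picks up an $\alpha^{g}$ via $\sum_{e}\alpha_{e}G^{eg}=\alpha^{g}$ and a Kronecker-$\delta$ via $\sum_{f} G_{bf}G^{fh} = \delta^{h}_{b}$, producing a sum manifestly symmetric in both $(a,b)$ and $(g,h)$. Substituting this back and contracting against $\beta_{(g}G_{h)(c}\beta_{d)}$ term by term using the two shortcuts expresses the LHS as a linear combination of monomials in $\alpha_{i},\beta_{j},G_{ij},(\alpha\vert\beta)$; collecting these into manifestly $(a,b)$- and $(c,d)$-symmetrized pieces reproduces the two-term right-hand side $\tfrac{1}{2}\alpha_{(a}\beta_{b)}\alpha_{(c}\beta_{d)} + \tfrac{1}{2}(\alpha\vert\beta)\alpha_{(a}G_{b)(c}\beta_{d)}$.

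The only real obstacle is the bookkeeping for identity (III): it produces sixteen terms before simplification and requires careful tracking of which index pairs are being symmetrized and which dummy-index swaps are admissible. Once the auxiliary identity above is established, however, no further conceptual input is needed, and each of the five identities reduces to a short direct computation.
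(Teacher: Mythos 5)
Your proposal is correct and takes essentially the same route as the paper: expand every symmetrizer bracket according to its definition, contract via $\sum_g \alpha^g G_{gc} = \alpha_c$ and $\sum_g \alpha^g \beta_g = (\alpha\vert\beta)$, and regroup. Your auxiliary identity $\sum_{e,f} G_{a(e}G_{f)b}\,G^{eg}G^{fh} = \tfrac{1}{2}(\delta^{g}_{a}\delta^{h}_{b} + \delta^{g}_{b}\delta^{h}_{a})$ is a tidy way to package the intermediate Kronecker contraction that the paper also performs inline for \eqref{IV} and \eqref{V}, but it does not change the underlying computation.
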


\begin{proof}
Observe first that $$\sum_{g=1}^k \alpha^g G_{gc} = \sum_{g,i=1}^k G^{gi}\alpha_i G_{gc} = \sum_{g,i=1}^kG_{cg}G^{gi}\alpha_i = \alpha_c$$ and $$\sum_{g=1}^k \alpha^g \beta_g = \sum_{g,i=1}^k G^{gi}\alpha_i \beta_g = \sum_{g,i=1}^k \alpha_iG^{ig} \beta_g = (\alpha|\beta).$$  Equality \eqref{I} can then be established as follows: 
\begin{eqnarray*}
  \sum_{g,h=1}^k\alpha^{g}\alpha^{h}G_{g(c}G_{d)h} & = & \frac{1}{2}\sum_{g,h,i,j=1}^kG^{gi}\alpha_iG^{hj}\alpha_j\left(G_{gc}G_{dh}+G_{gd}G_{ch}\right)\\
  & = & \frac{1}{2}\sum_{g,h,i,j=1}^k G_{cg}G^{gi}\alpha_iG_{dh}G^{hj}\alpha_j + G_{dg}G^{gi}\alpha_i G_{ch}G^{hj}\alpha_j\\
 & = & \frac{1}{2}\left(\alpha_{c}\alpha_{d}+\alpha_{d}\alpha_{c}\right)=\alpha_{(c}\alpha_{d)}=\alpha_{c}\alpha_{d}.
\end{eqnarray*}
A similar computation yields equality \eqref{II}:
\begin{eqnarray*}
\sum_{g,h=1}^k\alpha^{g}\alpha^{h}\beta_{(g}G_{h)(c}\beta_{d)} & = & \frac{1}{4}\sum_{g,h=1}^k\alpha^g\alpha^h\left(\beta_{g}G_{hc}\beta_{d}+\beta_{h}G_{gc}\beta_{d}+\beta_{g}G_{hd}\beta_{c}+\beta_{h}G_{gd}\beta_{c}\right)\\
 & = & \frac{1}{4}\left((\alpha|\beta)\alpha_{c}\beta_{d}+(\alpha|\beta)\alpha_{c}\beta_{d}+\left(\alpha\vert\beta\right)\alpha_{d}\beta_{c}+\left(\alpha\vert\beta\right)\alpha_{d}\beta_{c}\right)\\
 & = & \frac{1}{2}\left(\alpha\vert\beta\right)\left(\alpha_{c}\beta_{d}+\alpha_{d}\beta_{c}\right)=\left(\alpha\vert\beta\right)\alpha_{(c}\beta_{d)}.
\end{eqnarray*}
For equality ~\eqref{III} one computes the following:
\begin{eqnarray*}
 & & 16\sum_{e,f,g,h=1}^k\alpha_{(a}G_{b)(e}\alpha_{f)}G^{eg}G^{fh}\beta_{(g}G_{h)(c}\beta_{d)}\\
&= & \sum_{e,f,g,h=1}^k\left(\alpha_{a}G_{be}\alpha_{f}+\alpha_{b}G_{ae}\alpha_{f}+\alpha_{a}G_{bf}\alpha_{e}+\alpha_{b}G_{af}\alpha_{e}\right)G^{eg}G^{fh}\\
 && \left(\beta_{g}G_{hc}\beta_{d}+\beta_{h}G_{gc}\beta_{d}+\beta_{g}G_{hd}\beta_{c}+\beta_{h}G_{gd}\beta_{c}\right)\\
&= &\sum_{g,h=1}^k\left(\alpha_{a}\delta_{bg}\alpha^{h}+\alpha_{b}\delta_{ah}\alpha^{g}+\alpha_{a}\delta_{bh}\alpha^{g}+\alpha_{b}\delta_{ag}\alpha^{h}\right)\\
 && \left(\beta_{g}G_{hc}\beta_{d}+\beta_{h}G_{gc}\beta_{d}+\beta_{g}G_{hd}\beta_{c}+\beta_{h}G_{gd}\beta_{c}\right)\\
&= &  \alpha_{a}\beta_{b}\alpha_{c}\beta_{d}+{\color{red}\left(\alpha\vert\beta\right)\alpha_{a}G_{bc}\beta_{d}}+\alpha_{a}\beta_{b}\beta_{c}\alpha_{d}+{\color{green}\left(\alpha\vert\beta\right)\alpha_{a}G_{bd}\beta_{c}}\\
 && +{\color{blue}\left(\alpha\vert\beta\right)\alpha_{b}G_{ac}\beta_{d}}+\beta_{a}\alpha_{b}\alpha_{c}\beta_{d}+{\color{cyan}\left(\alpha\vert\beta\right)\alpha_{b}G_{ad}\beta_{c}}+\beta_{a}\alpha_{b}\beta_{c}\alpha_{d}\\
 && +{\color{red}\left(\alpha\vert\beta\right)\alpha_{a}G_{bc}\beta_{d}}+\alpha_{a}\beta_{b}\alpha_{c}\beta_{d}+{\color{green}\left(\alpha\vert\beta\right)\alpha_{a}G_{bd}\beta_{c}}+\alpha_{a}\beta_{b}\beta_{c}\alpha_{d}\\
 && +\beta_{a}\alpha_{b}\alpha_{c}\beta_{d}+{\color{blue}\left(\alpha\vert\beta\right)\alpha_{b}G_{ac}\beta_{d}}+\beta_{a}\alpha_{b}\beta_{c}\alpha_{d}+{\color{cyan}\left(\alpha\vert\beta\right)\alpha_{b}G_{ad}\beta_{c}}\\
&= &  2\left(\alpha_{a}\beta_{b}\alpha_{c}\beta_{d}+\alpha_{a}\beta_{b}\beta_{c}\alpha_{d}+\beta_{a}\alpha_{b}\alpha_{c}\beta_{d}+\beta_{a}\alpha_{b}\beta_{c}\alpha_{d}\right)\\
 && +2\left(\alpha\vert\beta\right)\left(\alpha_{a}G_{bc}\beta_{d}+\alpha_{a}G_{bd}\beta_{c}+\alpha_{b}G_{ac}\beta_{d}+\alpha_{b}G_{ad}\beta_{c}\right)\\
&= &  8\alpha_{(a}\beta_{b)}\alpha_{(c}\beta_{d)}+8\left(\alpha\vert\beta\right)\alpha_{(a}G_{b)(c}\beta_{d)}
\end{eqnarray*}
and, hence, 
\[
\sum_{e,f,g,h=1}^k\alpha_{(a}G_{b)(e}\alpha_{f)}G^{eg}G^{fh}\beta_{(g}G_{h)(c}\beta_{d)}=\frac{1}{2}\alpha_{(a}\beta_{b)}\alpha_{(c}\beta_{d)}+\frac{1}{2}\left(\alpha\vert\beta\right)\alpha_{(a}G_{b)(c}\beta_{d)}.
\]
Equality \eqref{IV} can be established as follows:
\begin{eqnarray*}
 && \sum_{e,f,g,h=1}^kG_{a(e}G_{f)b}G^{eg}G^{fh}\beta_{(g}G_{h)(c}\beta_{d)}\\
&= & \frac{1}{8}\sum_{e,f,g,h=1}^k\left(G_{ae}G_{fb}+G_{af}G_{eb}\right)G^{eg}G^{fh}\left(\beta_{g}G_{hc}\beta_{d}+\beta_{h}G_{gc}\beta_{d}+\beta_{g}G_{hd}\beta_{c}+\beta_{h}G_{gd}\beta_{c}\right)\\
&= & \frac{1}{8}\sum_{g,h=1}^k\left(\delta_{ag}\delta_{bh}+\delta_{ah}\delta_{bg}\right)\left(\beta_{g}G_{hc}\beta_{d}+\beta_{h}G_{gc}\beta_{d}+\beta_{g}G_{hd}\beta_{c}+\beta_{h}G_{gd}\beta_{c}\right)\\
&= & \ \frac{1}{8}\left(\beta_{a}G_{bc}\beta_{d}+\beta_{b}G_{ac}\beta_{d}+\beta_{a}G_{bd}\beta_{c}+\beta_{b}G_{ad}\beta_{c}\right)\\
 && +\frac{1}{8}\left(\beta_{b}G_{ac}\beta_{d}+\beta_{a}G_{bc}\beta_{d}+\beta_{b}G_{ad}\beta_{c}+\beta_{a}G_{bd}\beta_{c}\right)\\
&= & \ \frac{1}{4}\left(\beta_{a}G_{bc}\beta_{d}+\beta_{b}G_{ac}\beta_{d}+\beta_{a}G_{bd}\beta_{c}+\beta_{b}G_{ad}\beta_{c}\right)\\
&= & \ \beta_{(a}G_{b)(c}\beta_{d)}.
\end{eqnarray*}
Finally, equality \eqref{V} can be shown as follows:
\begin{eqnarray*}
 && \sum_{e,f,g,h=1}^kG_{a(e}G_{f)b}G^{eg}G^{fh}G_{g(c}G_{d)h}\\
&= & \frac{1}{4}\sum_{g,h=1}^k\left(\delta_{ag}\delta_{bh}+\delta_{ah}\delta_{bg}\right)\left(G_{gc}G_{dh}+G_{gd}G_{ch}\right)\\
&= & \frac{1}{4}\left(G_{ac}G_{db}+G_{ad}G_{cb}+G_{bc}G_{da}+G_{bd}G_{ca}\right)\\
&= & \frac{1}{2}\left(G_{ac}G_{db}+G_{ad}G_{cb}\right)\\
&= & G_{a(c}G_{d)b}.   \qedhere
\end{eqnarray*}
\end{proof}

Throughout this section let $\mathfrak{g}$ be a simply
laced split real Kac--Moody algebra with maximal compact subalgebra $\mathfrak{k}$.
By Corollary~\ref{cor:Construction of a gen spin rep. for simply laced case}
there exists a generalized $\frac{1}{2}$-spin representation $\rho : \mathfrak{k}\rightarrow\mathrm{End}(\mathbb{C}^{l})$. In analogy to Sections~\ref{sub:Extending-a-generalized} and \ref{3/2} we make use of Clifford algebras in order to define higher spin representations.

Define $V := \mathrm{Sym}^2(\mathfrak{h}^*)$. Then, given a basis $v^1$, ..., $v^k$ of $\mathfrak{h}^*$, the vector space $V$ admits the natural basis $\{ v^{i_1} \otimes v^{i_2} \mid 1 \leq i_1 \leq i_2 \leq k \}$. Given an orthonormal basis $f_1$, ..., $f_s$ of $S$ as in Section~\ref{sub:Extending-a-generalized} one arrives at a basis $\{ v^{i_1} \otimes v^{i_2} \otimes f_j \mid 1 \leq i_1 \leq i_2 \leq k, 1 \leq j \leq l \}$ of $V \otimes S$.

In analogy to \eqref{phidef} define
\begin{eqnarray}
\phi^{ab}_\alpha := v^a \otimes v^b \otimes f_\alpha = v^b \otimes v^a \otimes f_\alpha =: \phi^{ba}_\alpha
\end{eqnarray}

The invariant symmetric bilinear form $(\cdot|\cdot)$ on $\mathfrak{h}^*$ induces a natural symmetric bilinear form on $\mathfrak{h}^* \otimes \mathfrak{h}^*$ which, by symmetry, factors through a symmetric bilinear form $q_1$ on $V = \mathrm{Sym}^2(\mathfrak{h}^*)$.
If $(G^{ab})_{1 \leq a, b \leq k}$ as in Section~\ref{3/2} denotes the Gram matrix of $(\cdot|\cdot)$ with respect to the basis $v^1$, ..., $v^k$, the computation
\begin{eqnarray*}
q_1\left(v^a \otimes v^b,v^c \otimes v^d\right)  & = & \frac{1}{4}q_1\left(v^{a}\otimes v^{b}+v^{b}\otimes v^{a},v^{c}\otimes v^{d}+v^{d}\otimes v^{c}\right)\\
 & = & \frac{1}{4}\left[q_1\left(v^{a}\otimes v^{b},v^{c}\otimes v^{d}\right)+q_1\left(v^{a}\otimes v^{b},v^{d}\otimes v^{c}\right)\right]\\
 &  & +\frac{1}{4}\left[q_1\left(v^{b}\otimes v^{a},v^{c}\otimes v^{d}\right)+q_1\left(v^{b}\otimes v^{a},v^{d}\otimes v^{c}\right)\right]\\
 & = & \frac{1}{4}\left(G^{ac}G^{bd}+G^{ad}G^{bc}+G^{bc}G^{ad}+G^{bd}G^{ac}\right)\\
& = & \frac{1}{2}\left(G^{ac}G^{db}+G^{ad}G^{cb}\right) = \frac{1}{2}\left(G^{bd}G^{ca}+G^{bc}G^{da}\right) = \frac{1}{2}\left(G^{ca}G^{bd}+G^{cb}G^{ad}\right) \\
& = & G^{a(c}G^{d)b} = G^{b(c}G^{d)a} = G^{b(d}G^{c)a} = G^{c(a}G^{b)d}
\end{eqnarray*}
shows that the various symmetrizer brackets $$G^{a(c}G^{d)b} = G^{b(c}G^{d)a} = G^{b(d}G^{c)a} = G^{c(a}G^{b)d} = q_1\left(v^a \otimes v^b,v^c \otimes v^d\right) = G^{ac}G^{bd} \quad\quad \text{for all $a,b,c,d \in \{ 1, ..., k \}$}$$
all describe the Gram matrix of $q_1$ with respect to the basis $\{ v^{i_1} \otimes v^{i_2} \mid 1 \leq i_1 \leq i_2 \leq k \}$.
In analogy to Section~\ref{sub:Extending-a-generalized} define a symmetric bilinear form on the tensor product $V \otimes S$ via
$$q(\phi^{ab}_\alpha,\phi^{cd}_\beta) := G^{a(c}G^{d)b}\delta_{\alpha\beta} = G^{ac}G^{bd}\delta_{\alpha\beta}.$$
The above equality between various symmetrizer brackets makes it meaningful to define $$\phi^{\mathcal{A}}_\alpha := \frac{1}{2} \phi^{ab}_\alpha + \frac{1}{2} \phi^{ba}_\alpha \quad\quad \phi^{\mathcal{B}}_\beta := \frac{1}{2} \phi^{cd}_\beta + \frac{1}{2} \phi^{dc}_\beta$$
and $$G^{\mathcal{A}\mathcal{B}} := G^{a(c}G^{d)b} = G^{ac}G^{bd} = G^{ca}G^{db} = G^{c(a}G^{b)d} =: G^{\mathcal{B}\mathcal{A}}$$
in order to make the formalism of Lemma~\ref{lem:commutator of bilinear ansatz} and Proposition~\ref{prop:criterion for root operator-1} applicable also in the situation of $V = \mathrm{Sym}^2(\mathfrak{h}^*)$ by interpreting $\mathcal{A}$ and $\mathcal{B}$ as multi-indices whose constituents vary independently between $1$ and $k$.
For instance, expanding the commutator
\[
\left[X,Y\right]_{\mathcal{AB}} = \sum_{\mathcal{C},\mathcal{D}=1}^k\left(X_{\mathcal{AC}}G^{\mathcal{CD}}Y_{\mathcal{DB}}-Y_{\mathcal{AC}}G^{\mathcal{CD}}X_{\mathcal{DB}}\right)
\]
from
\eqref{so91commutator}
into the current setting with
\begin{align*}
  \mathcal{A} & \text{ corresponding to $a$, $b$}, \\
  \mathcal{B} & \text{ corresponding to $c$, $d$}, \\
  \mathcal{C} & \text{ corresponding to $e$, $f$}, \\
  \mathcal{D} & \text{ corresponding to $g$, $h$}
\end{align*}
  then yields
\[
\left[X,Y\right]_{ab\, cd}=\sum_{e,f,g,h=1}^k\left(X_{ab\, ef}G^{eg}G^{fh}Y_{gh\, cd}-Y_{ab\, ef}G^{eg}G^{fh}X_{gh\, cd}\right).
\]

\begin{prop} \label{5halfspin}
Let $\alpha=\sum_{i=1}^k\alpha_iv^i \in \mathfrak{h}^*$ be a real root. Then the matrices given by 
\begin{eqnarray}
X(\alpha)_{ab\, cd}=\frac{1}{2}\alpha_{a}\alpha_{b}\alpha_{c}\alpha_{d}-\alpha_{(a}G_{b)(c}\alpha_{d)}+\frac{1}{4}G_{a(c}G_{d)b} \label{52def}
\end{eqnarray}
satisfy for all $\alpha,\beta\in\Delta^{\mathrm{re}}$ such that $\left(\alpha\vert\beta\right)=0$
\[
\left[X(\alpha),X(\beta)\right]_{ab\, cd}=\sum_{e,f,g,h=1}^k\left(X(\alpha)_{ab\, ef}G^{eg}G^{fh}X(\beta)_{gh\, cd}-X(\beta)_{ab\, ef}G^{eg}G^{fh}X(\alpha)_{gh\, cd}\right)=0
\]
and for all $\alpha,\beta\in\Delta^{\mathrm{re}}$ such that $\left(\alpha\vert\beta\right)=\mp1$
\begin{eqnarray*}
\left\{ X(\alpha),X(\beta)\right\}_{ab\, cd}  & = & \sum_{e,f,g,h=1}^k\left(X(\alpha)_{ab\, ef}G^{eg}G^{fh}X(\beta)_{gh\, cd}+X(\beta)_{ab\, ef}G^{eg}G^{fh}X(\alpha)_{gh\, cd}\right)\\
 & = & \frac{1}{2}X(\alpha\pm\beta).
\end{eqnarray*}
In particular, the assigment $X_i \mapsto X(\alpha_i)$ defines a finite-dimensional representation of $\mathfrak{k}$.
\end{prop}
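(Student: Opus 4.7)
The approach is the multi-index analogue of the proof of Proposition~\ref{prop:the spin 3/2 extension}, specialized to the convention set up just before the statement where $(\mathcal{A},\mathcal{B}) \leftrightarrow ((ab),(cd))$ and $G^{\mathcal{A}\mathcal{B}} = G^{a(c}G^{d)b} = G^{ac}G^{bd}$. First I would note that the matrices $X(\alpha)$ defined by~\eqref{52def} are symmetric under each of $a\leftrightarrow b$, $c\leftrightarrow d$, and $(ab)\leftrightarrow(cd)$, so that the multi-index versions of Lemma~\ref{lem:commutator of bilinear ansatz} and Proposition~\ref{prop:criterion for root operator-1} apply verbatim. The task thereby reduces to verifying the two bracket identities~\eqref{eq:commutator of X(alpha)-1} and~\eqref{eq:anticommutator of X(alpha)-1} for the given $X$, from which the representation of $\mathfrak{k}$ is immediate.

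To organize the calculation I would split $X(\alpha) = T_1(\alpha) + T_2(\alpha) + T_3(\alpha)$ into its quartic, mixed, and Gram parts
\[
T_1(\alpha)_{ab\,cd}=\tfrac{1}{2}\alpha_{a}\alpha_{b}\alpha_{c}\alpha_{d}, \quad T_2(\alpha)_{ab\,cd}=-\alpha_{(a}G_{b)(c}\alpha_{d)}, \quad T_3(\alpha)_{ab\,cd}=\tfrac{1}{4}G_{a(c}G_{d)b},
\]
and expand the nine products $T_i(\alpha)\, T_j(\beta)$ along the multi-index contraction $\sum_{e,f,g,h}(\cdots)G^{eg}G^{fh}(\cdots)$. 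Identity~\eqref{I} reduces $T_1 T_1$ and the cross terms $T_1 T_3$ and $T_3 T_1$; identity~\eqref{II} handles $T_1 T_2$ and $T_2 T_1$; identity~\eqref{III} is tailor-made for $T_2 T_2$; identity~\eqref{IV} for $T_2 T_3$ and $T_3 T_2$; and identity~\eqref{V} for $T_3 T_3$. After these reductions, the output is a polynomial in $(\alpha\vert\beta)$ and in the symmetrized tensors $\alpha_{(a}\beta_{b)}\alpha_{(c}\beta_{d)}$, $\alpha_{(a}G_{b)(c}\beta_{d)}$, $G_{a(c}G_{d)b}$, together with their $\alpha\leftrightarrow\beta$ variants. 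For $(\alpha\vert\beta)=0$, each such term is either manifestly symmetric in $\alpha\leftrightarrow\beta$ (so cancels in the commutator) or carries an explicit factor $(\alpha\vert\beta)$ (so already vanishes), in direct analogy with the $\frac{3}{2}$-spin computation. For $(\alpha\vert\beta)=\mp 1$, the anti-commutator contributions regroup by tensor type: the quartic terms assemble into $\tfrac{1}{2}\cdot\tfrac{1}{2}(\alpha\pm\beta)_{a}(\alpha\pm\beta)_{b}(\alpha\pm\beta)_{c}(\alpha\pm\beta)_{d}$, the $\alpha G \alpha$, $\beta G \beta$, and $(\alpha\vert\beta)\alpha G \beta$ pieces combine into $-\tfrac{1}{2}(\alpha\pm\beta)_{(a}G_{b)(c}(\alpha\pm\beta)_{d)}$, and the pure Gram piece reproduces $\tfrac{1}{2}\cdot\tfrac{1}{4}G_{a(c}G_{d)b}$, giving $\tfrac{1}{2}X(\alpha\pm\beta)_{ab\,cd}$ as required.

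The main obstacle is the bookkeeping: identity~\eqref{III} simultaneously produces a pure quartic contribution $\alpha_{(a}\beta_{b)}\alpha_{(c}\beta_{d)}$ and an $(\alpha\vert\beta)$-proportional Gram-cross contribution $(\alpha\vert\beta)\alpha_{(a}G_{b)(c}\beta_{d)}$, and both must interlock precisely with the contributions coming from $T_1 T_2 + T_2 T_1$ and $T_1 T_3 + T_3 T_1$ to enforce the exact coefficients $\tfrac{1}{2}$, $-1$, $\tfrac{1}{4}$ in~\eqref{52def}. This interlocking is what makes the ansatz work, and it must be tracked carefully throughout the nine sub-expansions. Once it is verified, both hypotheses of Proposition~\ref{prop:criterion for root operator-1} hold and the $\frac{5}{2}$-spin representation of $\mathfrak{k}$ on $\mathrm{Sym}^2(\mathfrak{h}^*) \otimes \mathbb{C}^{s}$ follows.
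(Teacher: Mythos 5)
Your proposal follows essentially the same route as the paper's proof: reduce to Proposition~\ref{prop:criterion for root operator-1} via the multi-index convention $(\mathcal{A},\mathcal{B}) \leftrightarrow ((ab),(cd))$, decompose $X(\alpha)$ into its quartic, mixed, and Gram parts, expand $S_{ab\,cd}=\sum X(\alpha)_{ab\,ef}G^{eg}G^{fh}X(\beta)_{gh\,cd}$ term-by-term using identities \eqref{I}--\eqref{V}, and then verify cancellation in the commutator when $(\alpha|\beta)=0$ and regrouping into $\frac{1}{2}X(\alpha\pm\beta)$ in the anti-commutator when $(\alpha|\beta)=\mp1$. The paper does exactly this (with a color-coded bookkeeping of the cancellations), so your sketch is accurate; the only cosmetic imprecision is that $T_1T_1$ uses the plain contraction $\alpha^g\beta_g=(\alpha|\beta)$ rather than identity \eqref{I}, but this does not affect the argument.
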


\begin{rem}
Formula \eqref{52def} is \cite[(5.4), p.~18]{AN1}.
\end{rem}

\begin{proof}[Proof of Proposition~\ref{5halfspin}.]
It suffices to establish the hypotheses of Proposition~\ref{prop:criterion for root operator-1}. By definition, $X(\alpha)$ is symmetric.

Define $$S_{ab\, cd}:=\sum_{e,f,g,h=1}^kX(\alpha)_{ab\, ef}G^{eg}G^{fh}X(\beta)_{gh\, cd}$$
and calculate the following; for the sake of the exposition in the next calculation we use Einstein's summation convention, i.e., equal indices are summed over if one is upper and one is lower.
\begin{eqnarray*}
S_{ab\, cd} & = & \left(\frac{1}{2}\alpha_{a}\alpha_{b}\alpha_{e}\alpha_{f}-\alpha_{(a}G_{b)(e}\alpha_{f)}+\frac{1}{4}G_{a(e}G_{f)b}\right)G^{eg}G^{fh}\\
 &  & \left(\frac{1}{2}\beta_{g}\beta_{h}\beta_{c}\beta_{d}-\beta_{(g}G_{h)(c}\beta_{d)}+\frac{1}{4}G_{g(c}G_{d)h}\right)\\
 & = & \frac{1}{4}\alpha_{a}\alpha_{b}\alpha^{g}\alpha^{h}\beta_{g}\beta_{h}\beta_{c}\beta_{d}-\frac{1}{2}\alpha_{a}\alpha_{b}\alpha^{g}\alpha^{h}\beta_{(g}G_{h)(c}\beta_{d)}+\frac{1}{8}\alpha_{a}\alpha_{b}\alpha^{g}\alpha^{h}G_{g(c}G_{d)h}\\
 &  & -\frac{1}{2}\alpha_{(a}G_{b)(e}\alpha_{f)}\beta^{e}\beta^{f}\beta_{c}\beta_{d}+\alpha_{(a}G_{b)(e}\alpha_{f)}G^{eg}G^{fh}\beta_{(g}G_{h)(c}\beta_{d)}\\
 &  & -\frac{1}{4}\alpha_{(a}G_{b)(e}\alpha_{f)}G^{eg}G^{fh}G_{g(c}G_{d)h}+\frac{1}{8}G_{a(e}G_{f)b}\beta^{e}\beta^{f}\beta_{c}\beta_{d}\\
 &  & -\frac{1}{4}G_{a(e}G_{f)b}G^{eg}G^{fh}\beta_{(g}G_{h)(c}\beta_{d)}+\frac{1}{16}G_{a(e}G_{f)b}G^{eg}G^{fh}G_{g(c}G_{d)h}\\
 & = & \frac{1}{4}\left(\alpha\vert\beta\right)^{2}\alpha_{a}\alpha_{b}\beta_{c}\beta_{d}-\frac{1}{2}\left(\alpha\vert\beta\right)\alpha_{a}\alpha_{b}\alpha_{(c}\beta_{d)}+\frac{1}{8}\alpha_{a}\alpha_{b}\alpha_{c}\alpha_{d}\\
 &  & -\frac{1}{2}\left(\alpha\vert\beta\right)\alpha_{(a}\beta_{b)}\beta_{c}\beta_{d}+\frac{1}{2}\alpha_{(a}\beta_{b)}\alpha_{(c}\beta_{d)}+\frac{1}{2}\left(\alpha\vert\beta\right)\alpha_{(a}G_{b)(c}\beta_{d)}\\
 &  & -\frac{1}{4}\alpha_{(a}G_{b)(c}\alpha_{d)}+\frac{1}{8}\beta_{a}\beta_{b}\beta_{c}\beta_{d}-\frac{1}{4}\beta_{(a}G_{b)(c}\beta_{d)}+\frac{1}{16}G_{a(c}G_{d)b}.
\end{eqnarray*}
Next one computes the commutator $C_{ab\, cd}:=\left[X(\alpha),X(\beta)\right]_{ab\, cd}$
in the case of $\left(\alpha\vert\beta\right)=0$ to be equal to 
\begin{eqnarray*}
C_{ab\, cd} & = & \sum_{e,f,g,h=1}^k \left(X(\alpha)_{ab\, ef}G^{eg}G^{fh}X(\beta)_{gh\, cd}-X(\beta)_{ab\, ef}G^{eg}G^{fh}X(\alpha)_{gh\, cd}\right)\\
 & = & \frac{1}{8}\alpha_{a}\alpha_{b}\alpha_{c}\alpha_{d}+\frac{1}{2}\alpha_{(a}\beta_{b)}\alpha_{(c}\beta_{d)}-\frac{1}{4}\alpha_{(a}G_{b)(c}\alpha_{d)}\\
 &  & +\frac{1}{8}\beta_{a}\beta_{b}\beta_{c}\beta_{d}-\frac{1}{4}\beta_{(a}G_{b)(c}\beta_{d)}+\frac{1}{16}G_{a(c}G_{d)b}-(\alpha\leftrightarrow\beta)\\
 & = & \frac{1}{8}{\color{red}\alpha_{a}\alpha_{b}\alpha_{c}\alpha_{d}}+{\color{green}\frac{1}{2}\alpha_{(a}\beta_{b)}\alpha_{(c}\beta_{d)}}-{\color{blue}\frac{1}{4}\alpha_{(a}G_{b)(c}\alpha_{d)}}\\
 &  & +\frac{1}{8}{\color{cyan}\beta_{a}\beta_{b}\beta_{c}\beta_{d}}-{\color{magenta}\frac{1}{4}\beta_{(a}G_{b)(c}\beta_{d)}}+\frac{1}{16}G_{a(c}G_{d)b}\\
 &  & -\frac{1}{8}{\color{cyan}\beta_{a}\beta_{b}\beta_{c}\beta_{d}}-{\color{green}\frac{1}{2}\beta_{(a}\alpha_{b)}\beta_{(c}\alpha_{d)}}+{\color{magenta}\frac{1}{4}\beta_{(a}G_{b)(c}\beta_{d)}}\\
 &  & -\frac{1}{8}{\color{red}\alpha_{a}\alpha_{b}\alpha_{c}\alpha_{d}}+{\color{blue}\frac{1}{4}\alpha_{(a}G_{b)(c}\alpha_{d)}}-\frac{1}{16}G_{a(c}G_{d)b}\\
 & = & 0,
\end{eqnarray*}
since $\alpha_{(a}\beta_{b)}\alpha_{(c}\beta_{d)}=\beta_{(a}\alpha_{b)}\beta_{(c}\alpha_{d)}$
and the terms with matching colours cancel. (Here the symbol $(\alpha\leftrightarrow\beta)$ denotes a repetition of all previous terms with the roles of $\alpha$ and $\beta$ interchanged.) In a similar fashion one
calculates the anti-commutator $$A_{ab\, cd}:=\sum_{e,f,g,h=1}^k\left(X(\alpha)_{ab\, ef}G^{eg}G^{fh}X(\beta)_{gh\, cd}+X(\beta)_{ab\, ef}G^{eg}G^{fh}X(\alpha)_{gh\, cd}\right)$$
for $\left(\alpha\vert\beta\right)=\pm1$ to be 
\begin{eqnarray*}
A_{ab\, cd} & = & \frac{1}{4}\alpha_{a}\alpha_{b}\beta_{c}\beta_{d}\mp\frac{1}{2}\alpha_{a}\alpha_{b}\alpha_{(c}\beta_{d)}+\frac{1}{8}\alpha_{a}\alpha_{b}\alpha_{c}\alpha_{d}\\
 &  & \mp\frac{1}{2}\alpha_{(a}\beta_{b)}\beta_{c}\beta_{d}+\frac{1}{2}\alpha_{(a}\beta_{b)}\alpha_{(c}\beta_{d)}\pm\frac{1}{2}\alpha_{(a}G_{b)(c}\beta_{d)}\\
 &  & -\frac{1}{4}\alpha_{(a}G_{b)(c}\alpha_{d)}+\frac{1}{8}\beta_{a}\beta_{b}\beta_{c}\beta_{d}-\frac{1}{4}\beta_{(a}G_{b)(c}\beta_{d)}+\frac{1}{16}G_{a(c}G_{d)b}\\
 &  & \frac{1}{4}\beta_{a}\beta_{b}\alpha_{c}\alpha_{d}\mp\frac{1}{2}\beta_{a}\beta_{b}\beta_{(c}\alpha_{d)}+\frac{1}{8}\beta_{a}\beta_{b}\beta_{c}\beta_{d}\\
 &  & \mp\frac{1}{2}\beta_{(a}\alpha_{b)}\alpha_{c}\alpha_{d}+\frac{1}{2}\beta_{(a}\alpha_{b)}\beta_{(c}\alpha_{d)}\pm\frac{1}{2}\beta_{(a}G_{b)(c}\alpha_{d)}\\
 &  & -\frac{1}{4}\beta_{(a}G_{b)(c}\beta_{d)}+\frac{1}{8}\alpha_{a}\alpha_{b}\alpha_{c}\alpha_{d}-\frac{1}{4}\alpha_{(a}G_{b)(c}\alpha_{d)}+\frac{1}{16}G_{a(c}G_{d)b} \\
 & = & \frac{1}{4}\alpha_{a}\alpha_{b}\alpha_{c}\alpha_{d}\mp\frac{1}{2}\alpha_{a}\alpha_{b}\alpha_{(c}\beta_{d)}\mp\frac{1}{2}\beta_{(a}\alpha_{b)}\alpha_{c}\alpha_{d}+\frac{1}{4}\alpha_{a}\alpha_{b}\beta_{c}\beta_{d}\\
 &  & +\frac{1}{4}\beta_{a}\beta_{b}\alpha_{c}\alpha_{d}+\alpha_{(a}\beta_{b)}\alpha_{(c}\beta_{d)}\mp\frac{1}{2}\beta_{a}\beta_{b}\beta_{(c}\alpha_{d)}\mp\frac{1}{2}\alpha_{(a}\beta_{b)}\beta_{c}\beta_{d}\\
 &  & +\frac{1}{4}\beta_{a}\beta_{b}\beta_{c}\beta_{d}\\
 &  & -\frac{1}{2}\alpha_{(a}G_{b)(c}\alpha_{d)}\pm\frac{1}{2}\alpha_{(a}G_{b)(c}\beta_{d)}\pm\frac{1}{2}\beta_{(a}G_{b)(c}\alpha_{d)}-\frac{1}{2}\beta_{(a}G_{b)(c}\beta_{d)}\\
 &  & +\frac{1}{8}G_{a(c}G_{d)b} \\
 & = & \frac{1}{4}\alpha_{a}\alpha_{b}\alpha_{c}\alpha_{d}\mp\frac{1}{4}\left(\alpha_{a}\alpha_{b}\alpha_{c}\beta_{d}+\alpha_{a}\alpha_{b}\beta_{c}\alpha_{d}+\alpha_{a}\beta_{b}\alpha_{c}\alpha_{d}+\beta_{a}\alpha_{b}\alpha_{c}\alpha_{d}\right)\\
 &  & +\frac{1}{4}\left(\alpha_{a}\alpha_{b}\beta_{c}\beta_{d}+\beta_{a}\beta_{b}\alpha_{c}\alpha_{d}+\alpha_{a}\beta_{b}\alpha_{c}\beta_{d}+\beta_{a}\alpha_{b}\alpha_{c}\beta_{d}+\alpha_{a}\beta_{b}\beta_{c}\alpha_{d}+\beta_{a}\alpha_{b}\beta_{c}\alpha_{d}\right)\\
 &  & \mp\frac{1}{4}\left(\beta_{a}\beta_{b}\beta_{c}\alpha_{d}+\beta_{a}\beta_{b}\alpha_{c}\beta_{d}+\beta_{a}\alpha_{b}\beta_{c}\beta_{d}+\alpha_{a}\beta_{b}\beta_{c}\beta_{d}\right)+\frac{1}{4}\beta_{a}\beta_{b}\beta_{c}\beta_{d}\\
 &  & -\frac{1}{2}\left(\alpha_{(a}G_{b)(c}\alpha_{d)}\mp\alpha_{(a}G_{b)(c}\beta_{d)}\mp\beta_{(a}G_{b)(c}\alpha_{d)}+\beta_{(a}G_{b)(c}\beta_{d)}\right)+\frac{1}{8}G_{a(c}G_{d)b}\\
 & = & \frac{1}{4}(\alpha\mp\beta)_{a}(\alpha\mp\beta)_{b}(\alpha\mp\beta)_{c}(\alpha\mp\beta)_{d}-\frac{1}{2}(\alpha\mp\beta)_{(a}G_{b)(c}(\alpha\mp\beta)_{d)}+\frac{1}{8}G_{a(c}G_{d)b}\\
 & = & \frac{1}{2}X(\alpha\mp\beta)_{ab\, cd}.
\end{eqnarray*}
This proves the claim.
\end{proof}

Again, we conclude this section with a coordinate-free version of Proposition~\ref{5halfspin}. 

\begin{prop} \label{free52}
For $V=\mathfrak{h}^{\ast}$ let $\left(\cdot\vert\cdot\right)$ denote
the induced invariant bilinear form on $\mathfrak{h}^{\ast}$. Moreover, for $\alpha \in \Delta^{\mathrm{re}}$ let $\pi_{\alpha}:=\alpha\left(\alpha\vert\cdot\right)\in\mathrm{End}\left(\mathfrak{h}^{\ast}\right)$.
Define
$X : \Delta^{\mathrm{re}}\rightarrow\mathrm{End}\left(\mathrm{Sym}^2(\mathfrak{h}^{\ast})\right)$
via 
\begin{eqnarray}
\alpha\mapsto X(\alpha) & := & \pi_{\alpha}\otimes\pi_{\alpha}-\left(\pi_{\alpha}\otimes id_{\mathfrak{h}^{\ast}}+id_{\mathfrak{h}^{\ast}}\otimes\pi_{\alpha}\right)+\frac{1}{2}id_{\mathfrak{h}^{\ast}}\otimes id_{\mathfrak{h}^{\ast}}.    \label{formula52}
\end{eqnarray}
Then $X$ satisfies \eqref{eq:commutator for X as endo}  and
\eqref{eq:anticommutator for X as endo} for all real roots $\alpha,\beta$
with $\left(\alpha\vert\beta\right)\in\left\{ 0,\pm1\right\} $ and
thus provides a representation $\sigma$ of $\mathfrak{k}$
by
sending 
\[
X_{i}\mapsto\sigma\left(X_{i}\right):=X\left(\alpha_{i}\right)\otimes\Gamma\left(\alpha_{i}\right).
\]
\end{prop}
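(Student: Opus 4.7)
The approach rests on the factorisation
\[
X(\alpha) \;=\; (\mathrm{id}_{\mathfrak{h}^{*}} - \pi_{\alpha}) \otimes (\mathrm{id}_{\mathfrak{h}^{*}} - \pi_{\alpha}) \;-\; \frac{1}{2}\, \mathrm{id}_{\mathfrak{h}^{*}} \otimes \mathrm{id}_{\mathfrak{h}^{*}},
\]
which one obtains by expanding the right-hand side and comparing with \eqref{formula52}. Since the canonical Weyl representation $\rho : W \to \mathrm{GL}(\mathfrak{h}^{*})$ sends $s_{\alpha}$ to $\rho(s_{\alpha}) = \mathrm{id}_{\mathfrak{h}^{*}} - \pi_{\alpha}$ (as already observed in the remark following Proposition~\ref{32coordinatefree}), this rewrites as $X(\alpha) = \rho_{\mathrm{Sym}^{2}}(s_{\alpha}) - \frac{1}{2}\mathrm{id}$, where $\rho_{\mathrm{Sym}^{2}}(w) := \rho(w) \otimes \rho(w)$ denotes the natural induced action of $W$ on $\mathrm{Sym}^{2}(\mathfrak{h}^{*})$. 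This places the proposition squarely in the setting of Remark~\ref{Paul}.

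When $(\alpha|\beta) = 0$, the reflections $s_{\alpha}$ and $s_{\beta}$ commute in $W$, so the operators $\rho_{\mathrm{Sym}^{2}}(s_{\alpha})$ and $\rho_{\mathrm{Sym}^{2}}(s_{\beta})$ commute, and $[X(\alpha), X(\beta)] = 0$ follows at once. When $(\alpha|\beta) = \mp 1$, I would invoke parts (iii) and (iv) of Remark~\ref{Paul}: the required identity $\{X(\alpha), X(\beta)\} = X(\alpha \pm \beta)$ is equivalent to the Weyl identity \eqref{weylidentity} for $\rho_{\mathrm{Sym}^{2}}$ on the standard subgroup $\langle s_{\alpha}, s_{\beta}\rangle \cong \mathrm{Sym}_{3}$, which in turn reduces to checking that the restriction $\rho_{\mathrm{Sym}^{2}}|_{\langle s_{\alpha}, s_{\beta}\rangle}$ contains no sign component.

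To establish this absence of sign, decompose $\mathfrak{h}^{*} = U \oplus U^{\perp}$ with $U := \mathrm{span}_{\mathbb{R}}(\alpha, \beta)$ and $U^{\perp} := \{x \in \mathfrak{h}^{*} : (\alpha|x) = (\beta|x) = 0\}$: the subgroup $\langle s_{\alpha}, s_{\beta}\rangle$ acts by its standard $2$-dimensional reflection representation on $U$ and trivially on $U^{\perp}$. Consequently
\[
\mathrm{Sym}^{2}(\mathfrak{h}^{*}) \;\cong\; \mathrm{Sym}^{2}(U) \;\oplus\; \bigl(U \otimes U^{\perp}\bigr) \;\oplus\; \mathrm{Sym}^{2}(U^{\perp})
\]
as $\mathrm{Sym}_{3}$-modules, and a direct character computation gives $\mathrm{Sym}^{2}(\mathrm{standard}) \cong \mathrm{trivial} \oplus \mathrm{standard}$, while the remaining summands are copies of the standard, respectively trivial, representation. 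No sign representation appears in any summand, closing the argument.

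The main obstacle is really just spotting the factorisation and identifying the $\mathrm{Sym}_{3}$-submodule structure; once these are in place, everything else is bookkeeping. Should one wish to avoid representation theory altogether (and thereby stay parallel to the proof of Proposition~\ref{32coordinatefree}), one can expand $\{X(\alpha), X(\beta)\}$ directly using the multiplication rule $\pi_{\alpha} \pi_{\beta} = (\alpha|\beta)\,\alpha(\beta|\cdot)$ (so $\pi_{\alpha}^{2} = 2\pi_{\alpha}$ on real roots in the simply laced case) together with the decomposition $\pi_{\alpha \pm \beta} = \pi_{\alpha} + \pi_{\beta} \pm \alpha(\beta|\cdot) \pm \beta(\alpha|\cdot)$; this amounts to a coordinate-free rerun of Proposition~\ref{5halfspin}, completely mechanical but visibly longer.
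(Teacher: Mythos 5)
Your proof is correct, but it takes a genuinely different route from the paper's own. The paper establishes Proposition~\ref{free52} by a direct coordinate-free expansion of $[X(\alpha),X(\beta)]$ and $\{X(\alpha),X(\beta)\}$ using the multiplication rules for the $\pi_\alpha$, with careful bookkeeping of many terms; in particular, the paper has to handle a subtle point separately — the operators $\pi_\alpha\pi_\beta\otimes\pi_\beta\pi_\alpha+\pi_\beta\pi_\alpha\otimes\pi_\alpha\pi_\beta$ and $\pi_\alpha\otimes\pi_\beta+\pi_\beta\otimes\pi_\alpha$ do \emph{not} agree on $\mathfrak{h}^*\otimes\mathfrak{h}^*$, only on $\mathrm{Sym}^2(\mathfrak{h}^*)$, which the paper verifies by evaluating on diagonal elements $h\otimes h$ and invoking diagonalizability of real symmetric $2$-tensors. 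Your approach, by contrast, identifies $X(\alpha)=\rho_{\mathrm{Sym}^2}(s_\alpha)-\tfrac{1}{2}\mathrm{id}$ (via the factorisation $(\mathrm{id}-\pi_\alpha)^{\otimes 2}-\tfrac{1}{2}\mathrm{id}^{\otimes 2}$, which is correct since $\rho(s_\alpha)=\mathrm{id}-\pi_\alpha$ under the normalization $(\alpha|\alpha)=2$), then invokes Remark~\ref{Paul}(iii)--(iv) to reduce the anticommutator identity to the absence of sign components in the restriction to $\mathrm{Sym}_3=\langle s_\alpha,s_\beta\rangle$, and finally kills the sign by the decomposition $\mathrm{Sym}^2(\mathfrak{h}^*)\cong\mathrm{Sym}^2(U)\oplus(U\otimes U^\perp)\oplus\mathrm{Sym}^2(U^\perp)$ together with the character computation $\mathrm{Sym}^2(\mathrm{standard})\cong\mathrm{trivial}\oplus\mathrm{standard}$ (all checks correct: characters $(3,1,0)=(1,1,1)+(2,0,-1)$). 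This is precisely the argument the paper itself describes as an \emph{alternative} in the remark immediately following the proposition, but does not carry out. Your route is shorter, more conceptual, and automatically sidesteps the $\mathrm{Sym}^2$-versus-tensor-square subtlety. One small point worth making explicit: the direct sum $\mathfrak{h}^*=U\oplus U^\perp$ relies on the restriction of $(\cdot|\cdot)$ to $U=\mathrm{span}(\alpha,\beta)$ being nondegenerate; this holds because the Gram matrix $\bigl(\begin{smallmatrix}2&\mp1\\\mp1&2\end{smallmatrix}\bigr)$ has determinant $3$, but the claim is not automatic in an indefinite ambient form and deserves a sentence.
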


\begin{proof}
Observe 
\[
\pi_{\alpha}\pi_{\beta}=\begin{cases}
0, & \text{if }\left(\alpha\vert\beta\right)=0,\\
\pm\alpha\left(\beta\vert\cdot\right), & \text{if }\left(\alpha\vert\beta\right)=\pm1,
\end{cases}
\]
and abbreviate $_{\alpha}\pi_{\beta}:=\alpha\left(\beta\vert\cdot\right)$
and $1\equiv id_{\mathfrak{h}^{\ast}}$. One computes for $\left(\alpha\vert\beta\right)=0$
that 
\begin{eqnarray*}
\left[X(\alpha),X(\beta)\right] & = & \pi_{\alpha}\pi_{\beta}\otimes\pi_{\alpha}\pi_{\beta}-\pi_{\alpha}\pi_{\beta}\otimes\pi_{\alpha}-\pi_{\alpha}\otimes\pi_{\alpha}\pi_{\beta}+\frac{1}{2}\pi_{\alpha}\otimes\pi_{\alpha}\\
 &  & -\pi_{\alpha}\pi_{\beta}\otimes\pi_{\beta}+\pi_{\alpha}\pi_{\beta}\otimes1+\pi_{\alpha}\otimes\pi_{\beta}-\frac{1}{2}\pi_{\alpha}\otimes1\\
 &  & -\pi_{\beta}\otimes\pi_{\alpha}\pi_{\beta}+\pi_{\beta}\otimes\pi_{\alpha}+1\otimes\pi_{\alpha}\pi_{\beta}-\frac{1}{2}\cdot1\otimes\pi_{\alpha}\\
 &  & +\frac{1}{2}\pi_{\beta}\otimes\pi_{\beta}-\frac{1}{2}\left(\pi_{\beta}\otimes1+1\otimes\pi_{\beta}\right)+\frac{1}{4}\cdot1\otimes1\\
 &  & -\left(\alpha\leftrightarrow\beta\right)\\
 & = & \frac{1}{2}\pi_{\alpha}\otimes\pi_{\alpha}+\pi_{\alpha}\otimes\pi_{\beta}-\frac{1}{2}\pi_{\alpha}\otimes1+\pi_{\beta}\otimes\pi_{\alpha}\\
 &  & -\frac{1}{2}\cdot1\otimes\pi_{\alpha}+\frac{1}{2}\pi_{\beta}\otimes\pi_{\beta}-\frac{1}{2}\left(\pi_{\beta}\otimes1+1\otimes\pi_{\beta}\right)+\frac{1}{4}\cdot1\otimes1\\
 &  & -\left(\alpha\leftrightarrow\beta\right)\\
 & = & 0
\end{eqnarray*}
because the first part is symmetric in $\alpha$ and $\beta$. (Here the symbol $(\alpha\leftrightarrow\beta)$ again denotes a repetition of all previous terms with the roles of $\alpha$ and $\beta$ interchanged.)

Before
evaluating the anti-commutator consider for $(\alpha\vert\beta)=\mp1$
\begin{eqnarray*}
\pi_{\alpha\pm\beta} & = & \left(\alpha\pm\beta\right)\left(\alpha\pm\beta\vert\cdot\right)\\
 & = & \pi_{\alpha}\pm{}_{\alpha}\pi_{\beta}\pm{}_{\beta}\pi_{\alpha}+\pi_{\beta}\\
 & = & \pi_{\alpha}-\pi_{\alpha}\pi_{\beta}-\pi_{\beta}\pi_{\alpha}+\pi_{\beta}
\end{eqnarray*}
 and, thus, 
\begin{eqnarray}
\pi_{\alpha\pm\beta}\otimes\pi_{\alpha\pm\beta} & = & \left(\pi_{\alpha}\pm{}_{\alpha}\pi_{\beta}\pm{}_{\beta}\pi_{\alpha}+\pi_{\beta}\right)\otimes\left(\pi_{\alpha}\pm{}_{\alpha}\pi_{\beta}\pm{}_{\beta}\pi_{\alpha}+\pi_{\beta}\right) \notag \\
 & = & \left(\pi_{\alpha}-\pi_{\alpha}\pi_{\beta}-\pi_{\beta}\pi_{\alpha}+\pi_{\beta}\right)\otimes\left(\pi_{\alpha}-\pi_{\alpha}\pi_{\beta}-\pi_{\beta}\pi_{\alpha}+\pi_{\beta}\right) \notag \\
 & = & \pi_{\alpha}\otimes\pi_{\alpha}-\pi_{\alpha}\otimes\pi_{\alpha}\pi_{\beta}-\pi_{\alpha}\otimes\pi_{\beta}\pi_{\alpha}+\pi_{\alpha}\otimes\pi_{\beta} \notag \\
 &  & -\pi_{\alpha}\pi_{\beta}\otimes\pi_{\alpha}+\pi_{\alpha}\pi_{\beta}\otimes\pi_{\alpha}\pi_{\beta}+{\color{magenta}\pi_{\alpha}\pi_{\beta}\otimes\pi_{\beta}\pi_{\alpha}}-\pi_{\alpha}\pi_{\beta}\otimes\pi_{\beta} \notag \\
 &  & -\pi_{\beta}\pi_{\alpha}\otimes\pi_{\alpha}+{\color{magenta}\pi_{\beta}\pi_{\alpha}\otimes\pi_{\alpha}\pi_{\beta}}+\pi_{\beta}\pi_{\alpha}\otimes\pi_{\beta}\pi_{\alpha}-\pi_{\beta}\pi_{\alpha}\otimes\pi_{\beta} \notag \\
 &  & +\pi_{\beta}\otimes\pi_{\alpha}-\pi_{\beta}\otimes\pi_{\alpha}\pi_{\beta}-\pi_{\beta}\otimes\pi_{\beta}\pi_{\alpha}+\pi_{\beta}\otimes\pi_{\beta}.  \label{longformula}
\end{eqnarray}
For the anti-commutator one computes
\begin{eqnarray*}
\left\{ X(\alpha),X(\beta)\right\}  & = & {\color{red}\pi_{\alpha}\pi_{\beta}\otimes\pi_{\alpha}\pi_{\beta}}-{\color{red}\pi_{\alpha}\pi_{\beta}\otimes\pi_{\alpha}}-{\color{red}\pi_{\alpha}\otimes\pi_{\alpha}\pi_{\beta}}+{\color{red}\frac{1}{2}\pi_{\alpha}\otimes\pi_{\alpha}}\\
 &  & -{\color{red}\pi_{\alpha}\pi_{\beta}\otimes\pi_{\beta}}+{\color{green}\pi_{\alpha}\pi_{\beta}\otimes1}+{\color{red}\pi_{\alpha}\otimes\pi_{\beta}}{\color{green}-\frac{1}{2}\pi_{\alpha}\otimes1}\\
 &  & -{\color{red}\pi_{\beta}\otimes\pi_{\alpha}\pi_{\beta}}+{\color{red}\pi_{\beta}\otimes\pi_{\alpha}}+{\color{green}1\otimes\pi_{\alpha}\pi_{\beta}}-{\color{green}\frac{1}{2}\cdot1\otimes\pi_{\alpha}}\\
 &  & +{\color{red}\frac{1}{2}\pi_{\beta}\otimes\pi_{\beta}}-{\color{green}\frac{1}{2}\left(\pi_{\beta}\otimes1+1\otimes\pi_{\beta}\right)}+\frac{1}{4}\cdot1\otimes1\\
 &  & +\left(\alpha\leftrightarrow\beta\right)\\
 & = & {\color{red}\pi_{\alpha}\pi_{\beta}\otimes\pi_{\alpha}\pi_{\beta}-\pi_{\alpha}\pi_{\beta}\otimes\pi_{\alpha}-\pi_{\alpha}\otimes\pi_{\alpha}\pi_{\beta}-\pi_{\alpha}\pi_{\beta}\otimes\pi_{\beta}}\\
 &  & {\color{red}+\pi_{\alpha}\otimes\pi_{\beta}-\pi_{\beta}\otimes\pi_{\alpha}\pi_{\beta}+\pi_{\beta}\otimes\pi_{\alpha}+\frac{1}{2}\pi_{\beta}\otimes\pi_{\beta}+\frac{1}{2}\pi_{\alpha}\otimes\pi_{\alpha}}\\
 &  & +\pi_{\beta}\pi_{\alpha}\otimes\pi_{\beta}\pi_{\alpha}-\pi_{\beta}\pi_{\alpha}\otimes\pi_{\beta}-\pi_{\beta}\otimes\pi_{\beta}\pi_{\alpha}-\pi_{\beta}\pi_{\alpha}\otimes\pi_{\alpha}\\
 &  & +\pi_{\beta}\otimes\pi_{\alpha}-\pi_{\alpha}\otimes\pi_{\beta}\pi_{\alpha}+\pi_{\alpha}\otimes\pi_{\beta}+\frac{1}{2}\pi_{\alpha}\otimes\pi_{\alpha}+\frac{1}{2}\pi_{\beta}\otimes\pi_{\beta}\\
 &  & {\color{green}+\pi_{\alpha}\pi_{\beta}\otimes1-\frac{1}{2}\pi_{\alpha}\otimes1+1\otimes\pi_{\alpha}\pi_{\beta}-\frac{1}{2}\cdot1\otimes\pi_{\alpha}}\\
 &  & {\color{green}-\frac{1}{2}\pi_{\beta}\otimes1-\frac{1}{2}1\otimes\pi_{\beta}}\\
 &  & +\pi_{\beta}\pi_{\alpha}\otimes1-\frac{1}{2}\pi_{\beta}\otimes1+1\otimes\pi_{\beta}\pi_{\alpha}-\frac{1}{2}1\otimes\pi_{\beta}\\
 &  & -\frac{1}{2}\pi_{\alpha}\otimes1-\frac{1}{2}\cdot1\otimes\pi_{\alpha}\\
 &  & +\frac{2}{4}\cdot1\otimes1.
\end{eqnarray*}
The two red lines and the consecutive two lines equal the term
\begin{eqnarray*}
 & & \pi_{\alpha}\otimes\pi_{\alpha}-\pi_{\alpha}\otimes\pi_{\alpha}\pi_{\beta}-\pi_{\alpha}\otimes\pi_{\beta}\pi_{\alpha}+\pi_{\alpha}\otimes\pi_{\beta}\\
 &  & -\pi_{\beta}\pi_{\alpha}\otimes\pi_{\alpha}-\pi_{\beta}\pi_{\alpha}\otimes\pi_{\beta}+\pi_{\beta}\pi_{\alpha}\otimes\pi_{\beta}\pi_{\alpha}+{\color{magenta}\pi_{\beta}\otimes\pi_{\alpha}}\\
 &  & -\pi_{\alpha}\pi_{\beta}\otimes\pi_{\alpha}+\pi_{\alpha}\pi_{\beta}\otimes\pi_{\alpha}\pi_{\beta}-\pi_{\alpha}\pi_{\beta}\otimes\pi_{\beta}+{\color{magenta}\pi_{\alpha}\otimes\pi_{\beta}}\\
 &  & \pi_{\beta}\otimes\pi_{\beta}-\pi_{\beta}\otimes\pi_{\beta}\pi_{\alpha}-\pi_{\beta}\otimes\pi_{\alpha}\pi_{\beta}+\pi_{\beta}\otimes\pi_{\alpha}
\end{eqnarray*}
which is almost identical to the expression for $\pi_{\alpha\pm\beta}\otimes\pi_{\alpha\pm\beta}$ derived in formula \eqref{longformula}
if it were not for the pink terms. Nevertheless, for $h\in\mathfrak{h}^{\ast}$
one evaluates
\begin{eqnarray*}
\left(\pi_{\alpha}\pi_{\beta}\otimes\pi_{\beta}\pi_{\alpha}+\pi_{\beta}\pi_{\alpha}\otimes\pi_{\alpha}\pi_{\beta}\right)\left(h,h\right) & = & \left(\mp\alpha\left(\beta\vert h\right)\right)\otimes\left(\mp\beta\left(\alpha\vert h\right)\right)\\
 &  & +\left(\mp\beta\left(\alpha\vert h\right)\right)\otimes\left(\mp\alpha\left(\beta\vert h\right)\right)\\
 & = & \left(\alpha\vert h\right)\left(\beta\vert h\right)\cdot\left(\alpha\otimes\beta+\beta\otimes\alpha\right)
\end{eqnarray*}
whereas 
\begin{eqnarray*}
\left(\pi_{\alpha}\otimes\pi_{\beta}+\pi_{\beta}\otimes\pi_{\alpha}\right)(h,h) & = & \alpha\left(\alpha\vert h\right)\otimes\left(\beta\vert h\right)\beta\\
 &  & +\left(\beta\vert h\right)\beta\otimes\alpha\left(\alpha\vert h\right)\\
 & = & \left(\alpha\vert h\right)\left(\beta\vert h\right)\cdot\left(\alpha\otimes\beta+\beta\otimes\alpha\right)\\
 & = & \left(\pi_{\alpha}\pi_{\beta}\otimes\pi_{\beta}\pi_{\alpha}+\pi_{\beta}\pi_{\alpha}\otimes\pi_{\alpha}\pi_{\beta}\right)\left(h,h\right)
\end{eqnarray*}
for arbitrary $h\in\mathfrak{h}^{\ast}$. Thus, using the diagonalizability of real symmetric tensors of degree two, the first four lines
of the anti-commutator are equal to $\pi_{\alpha\pm\beta}\otimes\pi_{\alpha\pm\beta}$.
The green lines and the two consecutive lines are evaluated to be 
\begin{eqnarray*}
 &  & {\color{green}+\pi_{\alpha}\pi_{\beta}\otimes1-\frac{1}{2}\pi_{\alpha}\otimes1+1\otimes\pi_{\alpha}\pi_{\beta}-\frac{1}{2}\cdot1\otimes\pi_{\alpha}}\\
 &  & {\color{green}-\frac{1}{2}\pi_{\beta}\otimes1-\frac{1}{2}1\otimes\pi_{\beta}}\\
 &  & +\pi_{\beta}\pi_{\alpha}\otimes1-\frac{1}{2}\pi_{\beta}\otimes1+1\otimes\pi_{\beta}\pi_{\alpha}-\frac{1}{2}1\otimes\pi_{\beta}\\
 &  & -\frac{1}{2}\pi_{\alpha}\otimes1-\frac{1}{2}\cdot1\otimes\pi_{\alpha}\\
 & = & \left(\pi_{\alpha}\pi_{\beta}-\frac{1}{2}\pi_{\alpha}-\frac{1}{2}\pi_{\beta}+\pi_{\beta}\pi_{\alpha}-\frac{1}{2}\pi_{\beta}-\frac{1}{2}\pi_{\alpha}\right)\otimes1\\
 &  & +1\otimes\left(\pi_{\alpha}\pi_{\beta}-\frac{1}{2}\pi_{\alpha}-\frac{1}{2}\pi_{\beta}+\pi_{\beta}\pi_{\alpha}-\frac{1}{2}\pi_{\beta}-\frac{1}{2}\pi_{\alpha}\right)\\
 & = & -\left(\pi_{\alpha}-\pi_{\alpha}\pi_{\beta}-\pi_{\beta}\pi_{\alpha}+\pi_{\beta}\right)\otimes1\\
 &  & -1\otimes\left(\pi_{\alpha}-\pi_{\alpha}\pi_{\beta}-\pi_{\beta}\pi_{\alpha}+\pi_{\beta}\right)\\
 & = & -\pi_{\alpha\pm\beta}\otimes1-1\otimes\pi_{\alpha\pm\beta}.
\end{eqnarray*}
 So one finds that for $\left(\alpha\vert\beta\right)=\mp1$ one has
\begin{eqnarray*}
\left\{ X(\alpha),X(\beta)\right\}  & = & \pi_{\alpha\pm\beta}\otimes\pi_{\alpha\pm\beta}-\pi_{\alpha\pm\beta}\otimes1-1\otimes\pi_{\alpha\pm\beta}+\frac{2}{4}\cdot1\otimes1\\
 & = & \pi_{\alpha\pm\beta}\otimes\pi_{\alpha\pm\beta}-\pi_{\alpha\pm\beta}\otimes1-1\otimes\pi_{\alpha\pm\beta}+\frac{1}{2}\cdot1\otimes1\\
 & = & X\left(\alpha\pm\beta\right)
\end{eqnarray*}
as desired. 
\end{proof}

\begin{rem}
Again note that the canonical Weyl group representation $\rho : W \to \mathrm{GL}(\mathrm{Sym}^2(\mathfrak{h}^*))$ yields $X(\alpha) = \rho(s_\alpha) - \frac{1}{2} \mathrm{id}$. Therefore Remark~\ref{Paul} applies and the statement of Proposition~\ref{free52} in fact follows from the observation that $\rho$ (restricted to any standard subgroup $\mathrm{Sym}_3)$ does not contain the sign representation as an irreducible component.  
\end{rem}

\end{document}